\newtheorem{theorem}{Theorem}
\newtheorem{lemma}[theorem]{Lemma}
\newtheorem{corollary}[theorem]{Corollary}
\newcommand{\os}{\overline{\sigma}}
\newcommand{\TT}{{\cal T}}
\begin{document}

\title{On the minimal teaching sets of two-dimensional threshold functions}
\author[1]{Max A. Alekseyev}
\author[2]{Marina G. Basova}
\author[2,3]{Nikolai Yu. Zolotykh}
\affil[1]{George Washington University, Washington, DC, USA}
\affil[2]{Lobachevsky State University, Nizhni Novgorod, Russia}
\affil[3]{Higher School of Economics, Nizhni Novgorod, Russia}

\maketitle

\begin{abstract}
It is known that a minimal teaching set of any threshold function on the two-dimensional rectangular grid consists of $3$ or $4$ points. 
We derive exact formulae for the numbers of functions corresponding to these values 
and further refine them in the case of a minimal teaching set of size $3$.
We also prove that the average cardinality of the minimal teaching sets of threshold functions is asymptotically $\nicefrac{7}{2}$.

We further present corollaries of these results concerning some special arrangements of lines in the plane.
\end{abstract}

\noindent
\paragraph{Keywords:} threshold function, integer lattice, teaching set, arrangement of lines

\noindent
\paragraph{AMS subject classifications:} 05A15, 05A18, 05B35, 52C05, 52C30, 68Q32




\thispagestyle{empty}

\newpage

\section{Introduction}

A mapping of the domain $\{0,1,\dots,m-1\}\times\{0,1,\dots,n-1\}$ into $\{0,1\}$, 
is called a {\em threshold function} iff there exists a line separating the sets of its ones and zeros
(points at which the function takes values $1$ and $0$, respectively).

A \emph{teaching set} of a threshold function $g$ is a subset of its domain 
such that the values of $g$ at the points from this set uniquely define the function.
A teaching set of $g$ is called {\em minimal} (or {\em irreducible}) iff no its proper subset is teaching for $g$.
It is known (e.g., see \cite{ABS1995,ShevchenkoZolotykh1998,ZolotykhShevchenko1999}) 
that the minimal teaching set $T(g)$ for any threshold function $g$ is unique
(this also holds for threshold functions in higher dimensions).
In \cite{ShevchenkoZolotykh1998,Zolotykh2001} it is proved that $T(g)$ consists of $3$ or $4$ points if $m\ge 2$, $n\ge 2$.
Here we derive exact formulae for the number of threshold functions corresponding to these values.
In the case when $T(g)$ consists of $3$ points, we further compute the number of threshold functions $g$ with the specified number (one or two) of zeros 
in $T(g)$, answering the question posed in \cite{Alekseyev2010}.
We also prove that the average cardinality of the minimal teaching set is asymptotically $\nicefrac{7}{2}$ as $m,n\to\infty$. 
Finally, we discuss some corollaries of this result concerning special arrangements of lines in the plane.

We remark that the cardinality of the minimal teaching set of threshold functions defined on {\em many\,}-dimensional 
grid is studied in a number of publications (e.g., see the bibliography in \cite{Zolotykh2008}). 
In particular, the bounds of the average cardinality of the minimal teaching set of threshold functions are given in \cite{ABS1995,VirovlyanskayaZolotykh2003}.

\section{Preliminary definitions and results}

Let $m\ge 2$, $n\ge 2$, $E_m=\{0,1,\dots,m-1\}$, $g:~E_m\times E_n \to \{0,1\}$.
We denote by $M_0(g)$, $M_1(g)$ the set of zeros and the set of ones of $g$, respectively, that is
$$
M_{\nu}(g) = \{x=(x_1, x_2)\in E_m\times E_n:~f(x)=\nu\}
\quad
(\nu \in \{0,1\}).
$$
The function $g$ is called {\em threshold} iff there exist real numbers $a_0$, $a_1$, $a_2$ such that 
\begin{equation}
M_0(g) = \{(x_1,x_2)\in E_m\times E_n\ :\ a_1 x_1 + a_2 x_2 \le a_0 \}.
\label{eq:M0}
\end{equation}
Without loss of generality we can assume that the numbers $a_0$, $a_1$, $a_2$ are integer and $a_1$, $a_2$ are not zero simultaneously.
We call the line $a_1 x_1 + a_2 x_2 = a_0$ a {\em separation line} for the threshold function $g$.
Note that for any line $a_1 x_1 + a_2 x_2 = a_0$, there are two associated threshold functions:
a function $g$ defined by \eqref{eq:M0} holds and a function $h$ with $M_0(h) = \{(x_1,x_2)\in E_m\times E_n\ :\ a_1 x_1 + a_2 x_2 > a_0 \}$, which also satisfy
$g(x)+h(x)=1$ for any $x\in E_m\times E_n$.

Let $\TT(m,n)$ be the set of all threshold functions defined on $E_m\times E_n$ and $t(m,n)=|\TT(m,n)|$.

Denote
$$
f_q(m,n) = \sum_{\substack{-m<i<m\\ -n<j<n\\ \gcd(i,j)=q}} (m-|i|)(n-|j|).
$$

Two distinct points $p$, $p'$ in $E_m \times E_n$ are called {\em adjacent} 
if and only if the line segment $[p,p']$ contains no other point from $E_m \times E_n$. 

\begin{lemma}[\cite{AcketaZunic1991,KLB1990}] \label{lemma_adj} 
The number of ordered pairs $(p, p')$ of adjacent points in $E_m\times E_n$ equals $f_1(m,n)$.
\end{lemma}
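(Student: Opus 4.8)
The plan is to classify ordered pairs of adjacent points by their difference vector. The first step is the standard characterization: two distinct points $p=(p_1,p_2)$ and $p'=(p_1',p_2')$ of $E_m\times E_n$ are adjacent if and only if $\gcd(p_1'-p_1,\,p_2'-p_2)=1$, where one uses the convention $\gcd(0,k)=|k|$. To verify this, set $(a,b)=(p_1'-p_1,\,p_2'-p_2)$ and $d=\gcd(a,b)$. A point of the form $p+t\,(p'-p)$ with $t\in[0,1]$ lies in the integer lattice precisely when $ta$ and $tb$ are both integers; invoking a B\'ezout relation for $a/d$ and $b/d$, this holds exactly for $t\in\{0,\nicefrac{1}{d},\nicefrac{2}{d},\dots,1\}$. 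Hence the segment $[p,p']$ contains a lattice point other than $p$ and $p'$ --- which then automatically lies in the bounding box $E_m\times E_n$ --- if and only if $d\ge 2$, establishing the claim.

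Next I would count, for each fixed nonzero vector $(i,j)$, the number of ordered pairs $(p,p')\in(E_m\times E_n)^2$ with $p'-p=(i,j)$. The first coordinate must satisfy $0\le p_1\le m-1$ and $0\le p_1+i\le m-1$ simultaneously, which admits $m-|i|$ solutions when $|i|\le m-1$ and none otherwise; likewise there are $n-|j|$ admissible choices of the second coordinate when $|j|\le n-1$. Thus the number of such ordered pairs equals $(m-|i|)(n-|j|)$ for $-m<i<m$ and $-n<j<n$, and $0$ outside this range.

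Combining the two steps, the number of ordered pairs of adjacent points in $E_m\times E_n$ is the sum of $(m-|i|)(n-|j|)$ over all difference vectors $(i,j)$ with $\gcd(i,j)=1$; the vector $(0,0)$, which would correspond to $p=p'$, is harmlessly excluded since $\gcd(0,0)=0\neq 1$. This sum is precisely $f_1(m,n)$, as claimed. The argument is elementary throughout; the only point demanding care is the gcd characterization of adjacency together with the $\gcd(0,k)=|k|$ convention, which is exactly what makes an axis-parallel pair at distance $1$ count as adjacent while one at larger distance does not.
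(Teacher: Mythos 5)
Your proof is correct. The paper itself does not prove this lemma --- it is imported by citation from \cite{AcketaZunic1991,KLB1990} --- so there is no in-paper argument to compare against; your route (the $\gcd$ visibility criterion for adjacency, with the convention $\gcd(0,k)=|k|$, followed by counting ordered pairs according to their difference vector $(i,j)$, each contributing $(m-|i|)(n-|j|)$) is the standard one and matches exactly the shape of the defining sum for $f_1(m,n)$. The two points needing care --- that intermediate lattice points on the segment automatically lie in the convex box $E_m\times E_n$, and that $(i,j)=(0,0)$ is excluded since $\gcd(0,0)=0$ --- are both handled.
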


\begin{lemma}[\cite{HaukkanenMerikoski2013}] \label{lemma_fqmn} 
If $m\le n$ then 
$$
f_q(m,n) = \frac{6}{\pi^2 q^2}(mn)^2 + O(mn^2).
$$
\end{lemma}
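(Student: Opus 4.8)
The estimate is classical (it is the standard ``visible lattice points'' count, here weighted); a self-contained proof proceeds by Möbius inversion, reducing $f_q(m,n)$ to sums over a single divisor variable. First I would introduce the auxiliary quantity
$$
h(m,r) \;=\; \sum_{\substack{|i|<m\\ r\mid i}}(m-|i|) \;=\; \#\{(x,x')\in E_m\times E_m\ :\ x\equiv x'\ (\mathrm{mod}\ r)\},
$$
and note that grouping the elements of $E_m$ into residue classes modulo $r$ gives the closed form $h(m,r)=\frac{m^2-s^2+rs}{r}$ with $s=m\bmod r$; equivalently $h(m,r)=\frac{m^2}{r}+\varepsilon(m,r)$ where $\varepsilon(m,r)=\frac{s(r-s)}{r}$ satisfies $0\le\varepsilon(m,r)\le r/4$ and also $\varepsilon(m,r)<m$ (the latter because $s\le m$ always, while $s=m$ forces $r>m$, whence $\varepsilon=m-m^2/r<m$).

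Next, writing $i=qa$, $j=qb$ and expanding $[\gcd(a,b)=1]=\sum_{e\mid a,\ e\mid b}\mu(e)$ (legitimate since every pair counted in $f_q$ has $(a,b)\ne(0,0)$), then interchanging the order of summation, one obtains
$$
f_q(m,n) \;=\; \sum_{e\ge 1}\mu(e)\,\bigl(h(m,qe)\,h(n,qe)-mn\bigr),
$$
where the term $-mn$ removes the contribution of $(i,j)=(0,0)$, and where every summand with $qe\ge n$ vanishes (then $h(m,qe)=m$, $h(n,qe)=n$), so the sum runs effectively over $1\le e<n/q$. Substituting $h(m,qe)h(n,qe)-mn=\frac{(mn)^2}{(qe)^2}+R_{qe}$ with
$$
R_r \;=\; \varepsilon(m,r)\frac{n^2}{r}+\varepsilon(n,r)\frac{m^2}{r}+\varepsilon(m,r)\,\varepsilon(n,r)-mn
$$
splits $f_q(m,n)$ into a main term and an error. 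The main term is $\frac{(mn)^2}{q^2}\sum_{1\le e<n/q}\frac{\mu(e)}{e^2}=\frac{(mn)^2}{q^2}\bigl(\frac{6}{\pi^2}+O(q/n)\bigr)=\frac{6}{\pi^2 q^2}(mn)^2+O(m^2n/q)$, which lies within $O(mn^2)$ since $m\le n$.

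It remains to show $\sum_{1\le e<n/q}\mu(e)R_{qe}=O(mn^2)$, which I would do term by term. Three of the four pieces are routine: for $-mn$ use $|\sum_{e\le x}\mu(e)|\le x$; for $\varepsilon(m,r)\varepsilon(n,r)$ use $\varepsilon<m$ and $\varepsilon<n$; for $\varepsilon(n,qe)\,m^2/(qe)$ use $\varepsilon(n,qe)\le qe/4$. In each case the sum has fewer than $n/q$ terms, so each piece is $O(mn^2/q)=O(mn^2)$. The main obstacle is the remaining piece $\sum_{1\le e<n/q}\mu(e)\,\varepsilon(m,qe)\frac{n^2}{qe}$: the crude bound $\varepsilon(m,qe)\le qe/4$ only gives $O(n^3/q)$, which is too large when $q$ is small (in particular for $q=1$). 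The fix is to split the range at $qe=m$. For $qe\le m$ there are at most $m/q$ indices, and $\varepsilon(m,qe)\le qe/4$ already yields $O(mn^2/q)$; for $qe>m$ one has the \emph{exact} value $\varepsilon(m,qe)=m-m^2/(qe)$ (since then $m\bmod qe=m$), so this part equals $\frac{mn^2}{q}\sum_{m/q<e<n/q}\frac{\mu(e)}{e}-\frac{m^2n^2}{q^2}\sum_{m/q<e<n/q}\frac{\mu(e)}{e^2}$, where the first sum is $O(1)$ by the classical elementary bound $\bigl|\sum_{e\le x}\mu(e)/e\bigr|\le 1$, and the second is $O(q/m)$ as a tail of a convergent series; together they contribute $O(mn^2/q)=O(mn^2)$. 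Collecting all pieces gives $f_q(m,n)=\frac{6}{\pi^2q^2}(mn)^2+O(mn^2)$. (All the tail estimates above are uniform in $q\ge 1$: when $q$ is comparable to $n$ the Möbius sum has only $O(1)$ terms, and both $f_q(m,n)$ and the main term are then directly seen to be $O(mn^2)$.) This reproves the lemma in a form matching \cite{HaukkanenMerikoski2013}.
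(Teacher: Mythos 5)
Correct. Note that the paper does not prove this lemma at all --- it is imported by citation from Haukkanen--Merikoski --- so there is no internal proof to compare against; your argument supplies a self-contained elementary proof of the cited estimate. Your route is the standard one (M\"obius inversion over the common divisor, after factoring the weighted count through $h(m,r)=\sum_{|i|<m,\ r\mid i}(m-|i|)$), and the details check out: the closed form $h(m,r)=\frac{m^2-s^2+rs}{r}$ and the bounds $0\le\varepsilon(m,r)\le r/4$, $\varepsilon(m,r)\le m$ are right; the identity $f_q(m,n)=\sum_{e\ge1}\mu(e)\bigl(h(m,qe)h(n,qe)-mn\bigr)$ with the $-mn$ correction for $(i,j)=(0,0)$ is valid and the summands indeed vanish once $qe\ge n\ge m$; and the main term gives $\frac{6}{\pi^2q^2}(mn)^2+O(m^2n/q)$, which is within $O(mn^2)$ for $m\le n$. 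The one genuinely delicate point --- that the crude bound $\varepsilon(m,qe)\le qe/4$ is insufficient for the piece $\varepsilon(m,qe)\,n^2/(qe)$ when $m\ll n$ --- you handle correctly by splitting at $qe=m$ and using the exact value $\varepsilon(m,qe)=m-m^2/(qe)$ together with the classical bound $\bigl|\sum_{e\le x}\mu(e)/e\bigr|\le 1$; this is exactly where a careless version of the argument would lose a factor of $n/m$. Uniformity in $q$ is also addressed. In short, your proof is a valid replacement for the citation; its main added value over the paper's treatment is precisely that it makes the lemma self-contained and makes the error term's dependence on $q$ (namely $O(mn^2/q)$ up to the main-term tail) explicit.
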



Denote by $l(m,n)$ the number of lines passing through at least two points in $E_m\times E_n$.
\begin{lemma}[\cite{Mustonen2009,HaukkanenMerikoski2012}]
\begin{equation}
l(m,n) = \frac{1}{2}\left(f_1(m,n) - f_2(m,n) \right).
\label{eq:lmnf1f2}
\end{equation}
\end{lemma}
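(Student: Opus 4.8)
The plan is to stratify the lines meeting $E_m\times E_n$ according to how many grid points they contain, and to read off both $f_1(m,n)$ and $f_2(m,n)$ from this stratification. For $r\ge 2$ let $a_r$ be the number of lines that pass through exactly $r$ points of $E_m\times E_n$, so that $l(m,n)=\sum_{r\ge 2}a_r$. The structural input is that the grid points on any fixed line $L$ form an arithmetic progression $p_1,\dots,p_r$ in the order they occur along $L$: the common difference $v=p_{s+1}-p_s$ is the primitive lattice vector in the direction of $L$, and more generally $p_s-p_t=(s-t)v$, so that the gcd of the coordinates of $p_s-p_t$ equals $|s-t|$.

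First I would recall that, by the definition of $f_q$, the quantity $f_q(m,n)$ equals the number of ordered pairs $(p,p')$ of points of $E_m\times E_n$ such that, writing $(i,j)=p'-p$, one has $\gcd(i,j)=q$; indeed, for each fixed displacement $(i,j)$ there are exactly $(m-|i|)(n-|j|)$ such pairs. Every such pair has $p\ne p'$ and therefore lies on a unique line. Restricting to a line $L$ carrying $r$ points, the pairs with $\gcd=1$ are exactly those with $|s-t|=1$, i.e.\ the adjacent (consecutive) pairs — consistent with Lemma~\ref{lemma_adj} — and there are $2(r-1)$ of them; the pairs with $\gcd=2$ are exactly those with $|s-t|=2$, of which there are $2(r-2)$ (and none when $r=2$). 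Summing over all lines gives
\[
f_1(m,n)=\sum_{r\ge 2}2(r-1)\,a_r,\qquad f_2(m,n)=\sum_{r\ge 2}2(r-2)\,a_r.
\]

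Subtracting the two identities, each coefficient $2(r-1)-2(r-2)$ collapses to $2$, independently of $r$, so
\[
f_1(m,n)-f_2(m,n)=\sum_{r\ge 2}2\,a_r=2\,l(m,n),
\]
which is \eqref{eq:lmnf1f2}. The only places needing care are the bookkeeping claims: that each ordered pair with $p\ne p'$ lies on exactly one line (uniqueness of the line through two points); that the grid points on a line really form a contiguous arithmetic progression with primitive step (convexity of the box, together with primitivity forcing no intermediate lattice point between consecutive terms, which is also what identifies consecutive pairs with adjacent pairs); and the boundary behaviour at $r=2$ for $f_2$. None of these is a genuine obstacle — the proof is essentially this one-line difference once the two counting identities are in place.
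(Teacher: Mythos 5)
Your proof is correct. The paper states this lemma as an imported result (citing Mustonen and Haukkanen--Merikoski) and gives no proof of its own, so there is nothing to compare against; your stratification of the lines by the number $r$ of grid points they carry, with $f_1(m,n)=\sum_{r\ge 2}2(r-1)a_r$ and $f_2(m,n)=\sum_{r\ge 2}2(r-2)a_r$ telescoping to $f_1-f_2=2\sum_{r\ge 2}a_r=2\,l(m,n)$, is the standard derivation, and the bookkeeping points you flag (uniqueness of the line through two distinct points, the contiguous arithmetic progression with primitive step on each line, and the vanishing of the $r=2$ term for $f_2$) are all correctly handled.
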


\begin{theorem}[\cite{KLB1990}] \label{th_tmn}
\begin{equation}
t(m,n) = f_1(m,n) + 2.
\label{eq:tmnfmn}
\end{equation}
\end{theorem}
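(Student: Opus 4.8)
The plan is to set up a bijection between "almost all" threshold functions and ordered pairs of adjacent points, then account for the two degenerate functions separately. Given a threshold function $g$ that is neither constantly $0$ nor constantly $1$, among all separation lines for $g$ we may push the line until it passes through at least two lattice points of $E_m\times E_n$ — indeed, starting from any separating line we can translate and rotate it (keeping the partition $M_0(g)\mid M_1(g)$ fixed) until it is "supported" by points on both sides. The first step is therefore to argue that every non-constant $g\in\TT(m,n)$ admits a canonical separation line meeting $E_m\times E_n$ in at least two points, and moreover that there is a distinguished such line determined by $g$: for instance, the line realizing the partition that is nearest to the $M_0$ side, which will contain two adjacent points $p,p'$ — the two points of $M_0(g)$ closest to the boundary along that line. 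Orienting the pair so that, say, $p\prec p'$ in a fixed lexicographic order fixes which side is $M_0$ and which is $M_1$; this produces a well-defined ordered pair $(p,p')$ of adjacent points.

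Second, I would show this map is injective: the ordered pair $(p,p')$ together with the convention "$M_0$ lies on the closed side containing the direction recorded by the ordering" recovers the line and hence $g$. Third, and this is the crux, I would show surjectivity — every ordered pair $(p,p')$ of adjacent points arises. Given adjacent $p,p'$, the segment $[p,p']$ contains no other lattice point, so the line $\ell$ through $p,p'$ can be perturbed into a strict separating line for the partition "$\ell$ and everything on one chosen side" versus "the other open side", giving a genuine threshold function; adjacency is exactly what guarantees that no lattice point is forced onto the wrong side when we slide $\ell$ infinitesimally, so the function is well-defined and distinct pairs give distinct functions by the injectivity argument. By Lemma~\ref{lemma_adj} the number of such ordered pairs is $f_1(m,n)$, so the non-constant threshold functions number $f_1(m,n)$; adding the two constant functions ($g\equiv 0$ and $g\equiv 1$, which are threshold via a line with all of $E_m\times E_n$ on one side) yields $t(m,n)=f_1(m,n)+2$.

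The main obstacle I anticipate is making the "canonical separation line" genuinely canonical and the correspondence genuinely bijective: one must be careful that a single threshold function is not counted twice because two different adjacent pairs $(p,p')$ and $(q,q')$ could, a priori, lie on the same extremal line or on parallel extremal lines bounding the same partition. Resolving this requires pinning down precisely which adjacent pair a function selects — presumably the unique pair of $M_0$-points on the extremal line that are consecutive along it — and checking that the extremal line on the $M_0$ side is unique (it is, since moving it any further would change $M_0$). A secondary subtlety is the orientation convention: an unordered adjacent pair determines a line but not which side is $M_0$, so the factor of $2$ implicit in "ordered pairs" in Lemma~\ref{lemma_adj} must be matched exactly by the two choices of side — and one must confirm these two threshold functions are always distinct, which holds because $M_0(g)\neq M_0(h)=E_m\times E_n\setminus M_0(g)$ as long as the grid is nonempty. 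Once these bookkeeping points are nailed down, the count follows immediately from Lemma~\ref{lemma_adj}.
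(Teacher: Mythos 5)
The paper itself offers no proof of this theorem --- it is quoted from \cite{KLB1990} --- so your argument has to stand on its own, and as written it has a genuine gap at exactly the point you flag and then dismiss. The two assertions you rely on are false in general: the ``extremal line on the $M_0$ side'' is not unique, and it need not single out one adjacent pair of $M_0$-points. Take $m=n=3$. For $g$ with $M_0(g)=\{(0,0),(1,0),(2,0)\}$ the extremal line is $y=0$, which contains three grid points and hence two consecutive pairs, so there is no canonical pair without an arbitrary choice. Worse, your surjectivity recipe sends both ordered pairs $((0,0),(1,0))$ and $((1,0),(2,0))$ (with the same choice of side) to this same function, because the partition ``$\ell$ together with one closed side'' does not depend on which adjacent pair on $\ell$ you started from; so the pair-to-function map is not injective, and the count $f_1(m,n)$ is not matched by the correspondence you describe. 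For the numbers to work out, those two ordered pairs must be assigned to two \emph{different} functions --- e.g.\ the threshold functions with $M_0=\{(0,0),(1,0)\}$ and with $M_0=\{(1,0),(2,0)\}$ --- and for those functions your forward map also breaks down: pushing a separating line toward $M_0$ can terminate, depending on how you push, at the line through $(1,0),(0,1)$, or at $y=0$, or at the line through $(0,1),(2,0)$, and some of these limit lines contain points of $M_1$. Hence ``the line realizing the partition nearest to the $M_0$ side'' is not a well-defined object, and the injectivity claim that the ordered pair plus a side convention ``recovers the line and hence $g$'' fails: the line $y=0$ with the lower closed side arises as a limiting separating line for three distinct functions ($M_0$ equal to the bottom row, to $\{(0,0),(1,0)\}$, and to $\{(1,0),(2,0)\}$).

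The statement itself is of course correct, and a correspondence between non-constant threshold functions and ordered adjacent pairs can be made to work, but it requires a genuinely more careful canonical rule than ``consecutive $M_0$-points on the extremal line'' --- this is the substance of the proofs in \cite{KLB1990} and \cite{AcketaZunic1991}, not bookkeeping. An alternative route that avoids the delicate bijection altogether is dual counting in the spirit of Section~5 of this paper: $t(m,n)=2\,c(m,n)$, where $c(m,n)$ is the number of cells of the arrangement of the lines $a_1x_1+a_2x_2=1$, $(x_1,x_2)\in E_m\times E_n\setminus\{(0,0)\}$, and the cell count can be computed by Euler's formula from the incidence structure of that arrangement, which again reduces to counting adjacent (mutually visible) pairs of grid points.
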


\begin{corollary}
\begin{equation}
t(m,n) = 2 + \sum_{\substack{-m<i<m\\ -n<j<n\\ \gcd(i,j)=1}} (m-|i|)(n-|j|) = \frac{6}{\pi^2}(mn)^2 + O(mn^2),
\label{eq:tmnass}
\end{equation}
where asymptotics holds for $m\le n$.
\end{corollary}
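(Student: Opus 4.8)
The plan is to obtain both equalities as immediate consequences of the results already assembled, with essentially no new work. For the first equality I would simply unfold the definition
$$
f_1(m,n)=\sum_{\substack{-m<i<m\\ -n<j<n\\ \gcd(i,j)=1}}(m-|i|)(n-|j|)
$$
inside the identity $t(m,n)=f_1(m,n)+2$ furnished by Theorem~\ref{th_tmn}. This step is purely notational: Theorem~\ref{th_tmn} already expresses $t(m,n)$ in terms of $f_1$, and the displayed sum is exactly what $f_1(m,n)$ abbreviates.

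For the asymptotic estimate I would invoke Lemma~\ref{lemma_fqmn} in the special case $q=1$, which states that $f_1(m,n)=\frac{6}{\pi^2}(mn)^2+O(mn^2)$ whenever $m\le n$. Since we are assuming $m\ge 2$, $n\ge 2$, the additive constant $2$ coming from Theorem~\ref{th_tmn} satisfies $2=O(mn^2)$ and is therefore absorbed into the error term. Combining this with the first equality yields $t(m,n)=\frac{6}{\pi^2}(mn)^2+O(mn^2)$, as claimed.

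I do not anticipate any genuine obstacle here: the corollary is a formal combination of Theorem~\ref{th_tmn} (the exact count) and Lemma~\ref{lemma_fqmn} with $q=1$ (the asymptotics of $f_1$), and the only thing worth remarking is the trivial fact that a bounded additive term does not affect an $O(mn^2)$ bound. The substantive content lies entirely in the cited results, so the write-up would consist of the two short displayed deductions above.
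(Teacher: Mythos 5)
Your proposal is correct and matches the paper's (implicit) derivation exactly: the corollary is just Theorem~\ref{th_tmn} with the definition of $f_1(m,n)$ unfolded, combined with Lemma~\ref{lemma_fqmn} for $q=1$, the constant $2$ being absorbed into the $O(mn^2)$ term.
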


We remark that the asymptotic behavior of $t(m,n)$ was studied by many authors (sometimes using different terminology), for example, 
\cite{KLB1990,AcketaZunic1991,Shevchenko1995,Alekseyev2010,Zunic2011,HaukkanenMerikoski2013}.

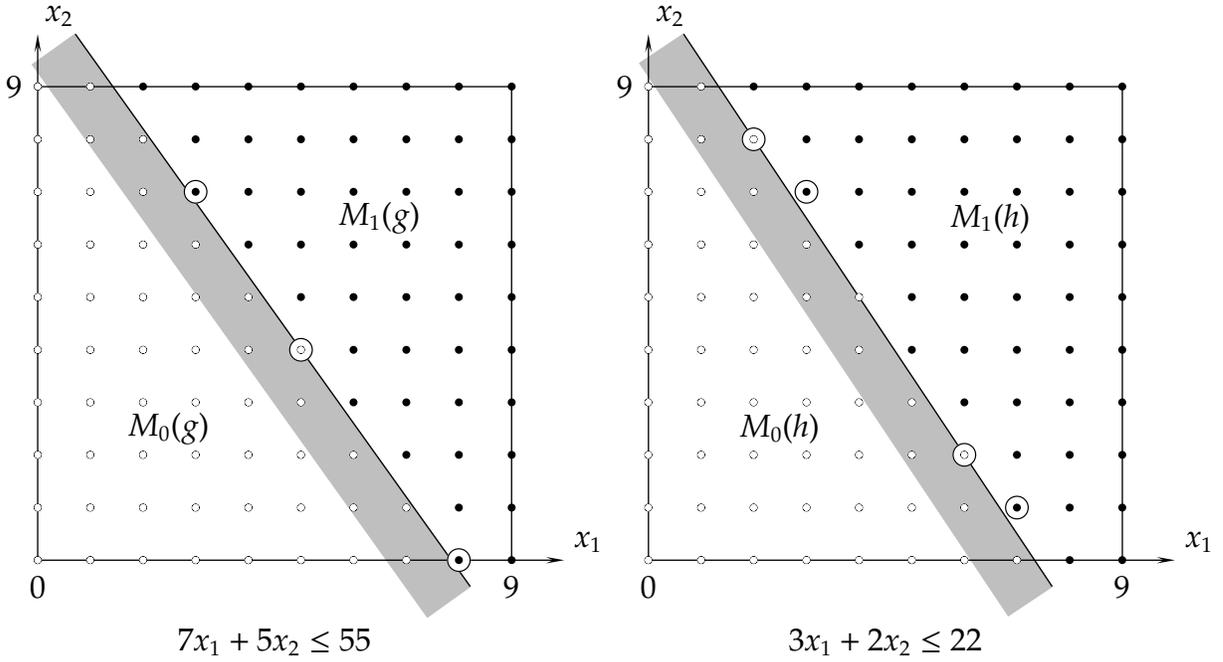
\begin{figure} 
\begin{center}

\begin{tabular}{lr}

\psset{xunit=.7cm,yunit=.7cm}
\begin{pspicture}(-1,-2)(10,10)

  \rput[t](4.5,-1.3){$7x_1 + 5x_2 \le 55$}

  \pspolygon[fillstyle=solid,fillcolor=lightgray,linecolor=lightgray](8.21429,-0.5)(0.7143,10)(-0.0994,9.4188)(7.4006,-1.0812)
  \psline(8.21429,-0.5)(0.7143,10)

  \psline[arrows=->](10,0)
  \psline[arrows=->](0,10)
  \rput[lb](10.2,0.15){$x_1$}
  \rput[lb](0.15,10.15){$x_2$}
  \rput[t](0,-0.3){$0$}
  \rput[t](9,-0.3){$9$}
  \rput[r](-0.3,9){$9$}
  \psline(0,9)(9,9)(9,0)
  \rput(2.5,2.5){$M_0(g)$}
  \rput(6.5,6.5){$M_1(g)$}

  \psdots[dotstyle=o,dotscale=3](5,4)
  \psdots[dotstyle=o,dotscale=3](8,0)(3,7)

  \psdots[dotstyle=o](0,0)\psdots[dotstyle=o](1,0)\psdots[dotstyle=o](2,0)\psdots[dotstyle=o](3,0)\psdots[dotstyle=o](4,0)\psdots[dotstyle=o](5,0) 
  \psdots[dotstyle=o](0,1)\psdots[dotstyle=o](1,1)\psdots[dotstyle=o](2,1)\psdots[dotstyle=o](3,1)\psdots[dotstyle=o](4,1)\psdots[dotstyle=o](5,1) 
  \psdots[dotstyle=o](0,2)\psdots[dotstyle=o](1,2)\psdots[dotstyle=o](2,2)\psdots[dotstyle=o](3,2)\psdots[dotstyle=o](4,2)\psdots[dotstyle=o](5,2) 
  \psdots[dotstyle=o](0,3)\psdots[dotstyle=o](1,3)\psdots[dotstyle=o](2,3)\psdots[dotstyle=o](3,3)\psdots[dotstyle=o](4,3)\psdots[dotstyle=o](5,3) 
  \psdots[dotstyle=o](0,4)\psdots[dotstyle=o](1,4)\psdots[dotstyle=o](2,4)\psdots[dotstyle=o](3,4)\psdots[dotstyle=o](4,4)\psdots[dotstyle=o](5,4) 
  \psdots[dotstyle=o](0,5)\psdots[dotstyle=o](1,5)\psdots[dotstyle=o](2,5)\psdots[dotstyle=o](3,5)\psdots[dotstyle=o](4,5)\psdots[          ](5,5) 
  \psdots[dotstyle=o](0,6)\psdots[dotstyle=o](1,6)\psdots[dotstyle=o](2,6)\psdots[dotstyle=o](3,6)\psdots[          ](4,6)\psdots[          ](5,6) 
  \psdots[dotstyle=o](0,7)\psdots[dotstyle=o](1,7)\psdots[dotstyle=o](2,7)\psdots[          ](3,7)\psdots[          ](4,7)\psdots[          ](5,7) 
  \psdots[dotstyle=o](0,8)\psdots[dotstyle=o](1,8)\psdots[dotstyle=o](2,8)\psdots[          ](3,8)\psdots[          ](4,8)\psdots[          ](5,8) 
  \psdots[dotstyle=o](0,9)\psdots[dotstyle=o](1,9)\psdots[          ](2,9)\psdots[          ](3,9)\psdots[          ](4,9)\psdots[          ](5,9)

  \psdots[dotstyle=o](6,0)\psdots[dotstyle=o](7,0)\psdots[          ](8,0)\psdots[          ](9,0) 
  \psdots[dotstyle=o](6,1)\psdots[dotstyle=o](7,1)\psdots[          ](8,1)\psdots[          ](9,1) 
  \psdots[dotstyle=o](6,2)\psdots[          ](7,2)\psdots[          ](8,2)\psdots[          ](9,2) 
  \psdots[          ](6,3)\psdots[          ](7,3)\psdots[          ](8,3)\psdots[          ](9,3) 
  \psdots[          ](6,4)\psdots[          ](7,4)\psdots[          ](8,4)\psdots[          ](9,4) 
  \psdots[          ](6,5)\psdots[          ](7,5)\psdots[          ](8,5)\psdots[          ](9,5) 
  \psdots[          ](6,6)\psdots[          ](7,6)\psdots[          ](8,6)\psdots[          ](9,6) 
  \psdots[          ](6,7)\psdots[          ](7,7)\psdots[          ](8,7)\psdots[          ](9,7) 
  \psdots[          ](6,8)\psdots[          ](7,8)\psdots[          ](8,8)\psdots[          ](9,8) 
  \psdots[          ](6,9)\psdots[          ](7,9)\psdots[          ](8,9)\psdots[          ](9,9) 


\end{pspicture}


&


\psset{xunit=.7cm,yunit=.7cm}
\begin{pspicture}(-1,-2)(10,10)

  \rput[t](4.5,-1.3){$3x_1 + 2x_2 \le 22$}

  \pspolygon[fillstyle=solid,fillcolor=lightgray,linecolor=lightgray](7.66667,-0.5)(0.666667,10)(-0.1654,9.4453)(6.8346,-1.0547)
  \psline(7.66667,-0.5)(0.666667,10) 

  \psline[arrows=->](10,0)
  \psline[arrows=->](0,10)
  \rput[lb](10.2,0.15){$x_1$}
  \rput[lb](0.15,10.15){$x_2$}
  \rput[t](0,-0.3){$0$}
  \rput[t](9,-0.3){$9$}
  \rput[r](-0.3,9){$9$}
  \psline(0,9)(9,9)(9,0)
  \rput(2.5,2.5){$M_0(h)$}
  \rput(6.5,6.5){$M_1(h)$}

  \psdots[dotstyle=o,dotscale=3](6,2)(2,8)
  \psdots[dotstyle=o,dotscale=3](7,1)(3,7)

  \psdots[dotstyle=o](0,0)\psdots[dotstyle=o](1,0)\psdots[dotstyle=o](2,0)\psdots[dotstyle=o](3,0)\psdots[dotstyle=o](4,0)\psdots[dotstyle=o](5,0) 
  \psdots[dotstyle=o](0,1)\psdots[dotstyle=o](1,1)\psdots[dotstyle=o](2,1)\psdots[dotstyle=o](3,1)\psdots[dotstyle=o](4,1)\psdots[dotstyle=o](5,1) 
  \psdots[dotstyle=o](0,2)\psdots[dotstyle=o](1,2)\psdots[dotstyle=o](2,2)\psdots[dotstyle=o](3,2)\psdots[dotstyle=o](4,2)\psdots[dotstyle=o](5,2) 
  \psdots[dotstyle=o](0,3)\psdots[dotstyle=o](1,3)\psdots[dotstyle=o](2,3)\psdots[dotstyle=o](3,3)\psdots[dotstyle=o](4,3)\psdots[dotstyle=o](5,3) 
  \psdots[dotstyle=o](0,4)\psdots[dotstyle=o](1,4)\psdots[dotstyle=o](2,4)\psdots[dotstyle=o](3,4)\psdots[dotstyle=o](4,4)\psdots[          ](5,4) 
  \psdots[dotstyle=o](0,5)\psdots[dotstyle=o](1,5)\psdots[dotstyle=o](2,5)\psdots[dotstyle=o](3,5)\psdots[dotstyle=o](4,5)\psdots[          ](5,5) 
  \psdots[dotstyle=o](0,6)\psdots[dotstyle=o](1,6)\psdots[dotstyle=o](2,6)\psdots[dotstyle=o](3,6)\psdots[          ](4,6)\psdots[          ](5,6) 
  \psdots[dotstyle=o](0,7)\psdots[dotstyle=o](1,7)\psdots[dotstyle=o](2,7)\psdots[          ](3,7)\psdots[          ](4,7)\psdots[          ](5,7) 
  \psdots[dotstyle=o](0,8)\psdots[dotstyle=o](1,8)\psdots[dotstyle=o](2,8)\psdots[          ](3,8)\psdots[          ](4,8)\psdots[          ](5,8) 
  \psdots[dotstyle=o](0,9)\psdots[dotstyle=o](1,9)\psdots[          ](2,9)\psdots[          ](3,9)\psdots[          ](4,9)\psdots[          ](5,9)

  \psdots[dotstyle=o](6,0)\psdots[dotstyle=o](7,0)\psdots[          ](8,0)\psdots[          ](9,0) 
  \psdots[dotstyle=o](6,1)\psdots[          ](7,1)\psdots[          ](8,1)\psdots[          ](9,1) 
  \psdots[dotstyle=o](6,2)\psdots[          ](7,2)\psdots[          ](8,2)\psdots[          ](9,2) 
  \psdots[          ](6,3)\psdots[          ](7,3)\psdots[          ](8,3)\psdots[          ](9,3) 
  \psdots[          ](6,4)\psdots[          ](7,4)\psdots[          ](8,4)\psdots[          ](9,4) 
  \psdots[          ](6,5)\psdots[          ](7,5)\psdots[          ](8,5)\psdots[          ](9,5) 
  \psdots[          ](6,6)\psdots[          ](7,6)\psdots[          ](8,6)\psdots[          ](9,6) 
  \psdots[          ](6,7)\psdots[          ](7,7)\psdots[          ](8,7)\psdots[          ](9,7) 
  \psdots[          ](6,8)\psdots[          ](7,8)\psdots[          ](8,8)\psdots[          ](9,8) 
  \psdots[          ](6,9)\psdots[          ](7,9)\psdots[          ](8,9)\psdots[          ](9,9)

\end{pspicture}

\end{tabular}

\end{center}
\caption{Threshold functions $g,h$ on $E_{10}\times E_{10}$ with $|T(g)|=3$ (left panel) and $|T(h)|=4$ (right panel) and their separation lines.
The empty dots represent zeros of the function, while the circled dots form its minimal teaching set.}\label{fig_exTf}
\end{figure}


\section{Cardinality of minimal teaching sets}

A set $T\subseteq E_m\times E_n$ is called {\em teaching} for $g\in\TT(m,n)$
iff for any other function $h\in\TT(m,n)$ there exists $x\in T$ such that $g(x)\ne h(x)$.
A teaching set of $g$ is called {\em minimal} or {\em irreducible} iff no its proper subset is teaching for $g$.
A point $x \in E_m\times E_n$ is called {\em essential} for $g\in\TT(m,n)$ iff there exists $h\in\TT(m,n)$
such that $g(x)\ne h(x)$ and $g(y)=h(y)$ for all $y\ne x$.

\begin{lemma}[\cite{ABS1995,ShevchenkoZolotykh1998,ZolotykhShevchenko1999}] \label{lemma_Tunique} 
For any function $g\in\TT(m,n)$, the minimal teaching set of $g$ is unique and consists of all essential points of $g$.
\end{lemma}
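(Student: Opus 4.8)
The plan is to establish the two facts that together pin down the minimal teaching set. Throughout, for threshold functions $g,h$ write $D(g,h)=\{x\in E_m\times E_n:\ g(x)\ne h(x)\}$, and let $S(g)$ denote the set of essential points of $g$. The first, easy, fact is that every essential point lies in every teaching set: if $x$ is essential, witnessed by some $h\in\TT(m,n)$ with $D(g,h)=\{x\}$, and $T$ is teaching for $g$, then $T$ must contain a point separating $g$ from $h$, and that point can only be $x$; hence $x\in T$. Consequently every teaching set contains $S(g)$, so it suffices to prove that $S(g)$ is itself teaching — then, since any minimal teaching set contains $S(g)$ and $S(g)$ is teaching, it must equal $S(g)$, giving uniqueness. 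Being teaching means that for every threshold $h\ne g$ there is $x\in S(g)$ with $g(x)\ne h(x)$; as such an $x$ automatically lies in $D(g,h)$, the whole lemma reduces to the claim that \emph{for every threshold $h\ne g$ the set $D(g,h)$ contains an essential point of $g$}.

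To prove this claim I would use a linear homotopy between separating functionals. After a harmless perturbation of the separating line of $g$ towards its ``ones'', fix an affine functional $\phi_g$ representing $g$ with $\phi_g(x)\ne 0$ for every $x\in E_m\times E_n$. The affine functionals representing $h$ form a nonempty open cone; pick one, $\phi'$, that is \emph{generic} in the sense that the numbers $t_x:=\phi_g(x)/\bigl(\phi_g(x)-\phi'(x)\bigr)$, $x\in D(g,h)$, are pairwise distinct. Setting $\phi_t=(1-t)\phi_g+t\phi'$ for $t\in[0,1]$ and letting $g_t$ be the threshold function it determines, one checks that a grid point $x$ changes side along this family at most once, and does so precisely when $x\in D(g,h)$, at ``time'' $t_x\in(0,1)$; moreover $g_0=g$, $g_1=h$, and $D(g,g_t)=\{x\in D(g,h):\ t_x<t\}$. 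Let $y\in D(g,h)$ attain $\min_{x\in D(g,h)}t_x$. For a sufficiently small $\varepsilon>0$ the functional $\phi_{t_y+\varepsilon}$ is still nonzero on the grid and the threshold function $h''$ it determines satisfies $D(g,h'')=\{y\}$; thus $y$ is essential and $y\in D(g,h)$, which is what was needed.

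The substantive point is the genericity step: one must choose a representing functional $\phi'$ for $h$ so that the straight segment from $\phi_g$ to $\phi'$ never passes through two crossing hyperplanes $\{\phi:\phi(y)=0\}$, $\{\phi:\phi(z)=0\}$ at the same parameter value — equivalently, so that the $t_x$ are distinct. This is where the care lies, but it amounts only to avoiding a finite union of proper hyperplanes (one per pair $y\ne z$ in $D(g,h)$) inside the open cone of functionals representing $h$; each such coincidence $t_y=t_z$ is a single nontrivial linear condition on the three coefficients of $\phi'$, so the bad set is negligible. Everything else is routine: the easy inclusion above, the fact that on a finite grid the threshold functions are exactly those whose zero- and one-sets are line-separable (so that perturbing a separating line off the lattice is legitimate and every off-line grid point lies strictly on one side), and the bookkeeping $D(g,g_t)=\{x\in D(g,h):t_x<t\}$. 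I therefore expect the genericity/transversality of the homotopy to be the only real thing to get right, with no deeper obstacle.
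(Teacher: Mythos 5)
Your proof is correct, but note that there is nothing in the paper to compare it with: Lemma~\ref{lemma_Tunique} is imported from \cite{ABS1995,ShevchenkoZolotykh1998,ZolotykhShevchenko1999} and the paper gives no proof of it. Your two-step scheme (every essential point lies in every teaching set; the set of essential points is itself teaching, hence is the unique minimal teaching set) is the standard skeleton, and the homotopy argument for the second step goes through: after perturbing $\phi_g$ off the grid, a generic strict representative $\phi'$ of $h$ makes the crossing times $t_x$, $x\in D(g,h)$, pairwise distinct --- each coincidence $t_y=t_z$ reduces to the linear condition $\phi_g(y)\phi'(z)=\phi_g(z)\phi'(y)$ on $\phi'$, whose nontriviality uses precisely $\phi_g(y)\neq 0$, guaranteed by your perturbation --- and the function read off just after the first crossing differs from $g$ only at the first crossing point, exhibiting an essential point inside $D(g,h)$. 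Two small points worth making explicit: an intermediate $\phi_t$ can be a constant functional (coefficients of $x_1,x_2$ both zero), but constant functions are threshold, so the witness $h''$ is in $\TT(m,n)$ in all cases; and the bookkeeping $D(g,g_t)=\{x\in D(g,h):t_x<t\}$ is only claimed for $t$ avoiding the crossing times (the paper's convention puts a point lying on the line into $M_0$), which your choice of $t_y+\varepsilon$ does. As for the relation to the cited sources: their argument is the dual polyhedral picture that the paper itself recalls in Section 5 --- the separating coefficient vectors of $g$ form the cone \eqref{eq:Kf}, $T(g)$ corresponds to the irredundant inequalities, and essential points are those whose hyperplanes carry facets shared with a neighboring cone. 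Your interpolation $\phi_t$ is exactly a generic segment in that parameter space crossing one hyperplane at a time, so the two arguments are geometrically the same; yours has the merit of being self-contained, of working verbatim in any dimension (consistent with the remark following the lemma), and of isolating the genericity/perturbation issue as the only point requiring care.
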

We remark that Lemma~\ref{lemma_Tunique} also holds for threshold functions defined on many-dimensional grid.

Denote by $T(g)$ the minimal teaching set of $g$.
In~\cite{ShevchenkoZolotykh1998,Zolotykh2001} it is shown that $|T(g)|\in\{3,4\}$ for any function $g\in\TT(m,n)$
(see examples in Fig.~\ref{fig_exTf}).

We define the {\em average cardinality} of the minimal teaching sets as
\begin{equation}
\os(m,n) = \frac{1}{t(m,n)} \sum_{g\in\TT(m,n)} |T(g)|. 
\label{eq:osdef}
\end{equation}

\begin{theorem} \label{osmn} 
\begin{equation}
\os(m,n) = \frac{4\cdot f_1(m,n)-2\cdot f_2(m,n)}{f_1(m,n)+2}.
\label{eq:osfff1}
\end{equation}
\end{theorem}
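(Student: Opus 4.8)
The plan is to evaluate the sum $\Sigma(m,n):=\sum_{g\in\TT(m,n)}|T(g)|$ in closed form and then divide by $t(m,n)=f_1(m,n)+2$ (Theorem~\ref{th_tmn}); this route does not even use the bound $|T(g)|\in\{3,4\}$.

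First I would reformulate $\Sigma(m,n)$ combinatorially. By Lemma~\ref{lemma_Tunique}, $\Sigma(m,n)$ counts the pairs $(g,x)$ with $x$ an essential point of $g$; flipping the value of $g$ at such an $x$ yields the unique $h\in\TT(m,n)$ agreeing with $g$ everywhere except at $x$, and conversely every ordered pair of distinct threshold functions differing at a single point arises this way. Recording such an ordered pair by its differing point $x$ together with the member $\phi\in\{g,h\}$ having $\phi(x)=0$ makes this a $2$-to-$1$ correspondence: the preimage of $(\phi,x)$ is $\{(\phi,x),(\phi',x)\}$, where $\phi'$ agrees with $\phi$ except that $\phi'(x)=1$. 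Writing $u_\theta=(\cos\theta,\sin\theta)$, I would then establish the key description: a pair $(\phi,x)$ with $\phi(x)=0$ arises this way if and only if $M_0(\phi)=\{p\in E_m\times E_n:\ \langle p,u_\theta\rangle\le\langle x,u_\theta\rangle\}$ for some \emph{generic} direction $\theta$, i.e., one for which $p\mapsto\langle p,u_\theta\rangle$ is injective on $E_m\times E_n$; write $\phi_{\theta,x}$ for such a $\phi$. One implication is immediate. For the other, if both $\phi$ and $\phi'$ are threshold, then the set of lines separating $M_0(\phi)\setminus\{x\}$ from $M_1(\phi)$ is convex (in the obvious parametrization) and contains separating lines of both $\phi$ and $\phi'$, relative to which $x$ lies on opposite sides; hence it contains a line through $x$, whose normal, slightly perturbed to become generic, is the required $\theta$. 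Thus $\Sigma(m,n)=2\sum_{x\in E_m\times E_n}\#\{\phi_{\theta,x}:\ \theta\ \text{generic}\}$.

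Next I would count, for a fixed grid point $x$, the distinct functions $\phi_{\theta,x}$ as $\theta$ runs over the circle. The partition $\phi_{\theta,x}$ can change only at directions $\theta$ perpendicular to some $p-x$ with $p\in E_m\times E_n$, $p\ne x$; two such vectors yield the same critical direction exactly when $x$ and the two points are collinear, so there are $2\,d(x)$ critical directions, where $d(x)$ is the number of lines through $x$ and at least one other grid point. At every critical direction some grid point crosses the level $\langle x,u_\theta\rangle$, so $\phi_{\theta,x}$ genuinely changes there; and the set of generic $\theta$ realizing a fixed partition is an intersection of open half-circles, hence a single arc. Therefore the $2\,d(x)$ open arcs between consecutive critical directions carry pairwise distinct functions, giving $\#\{\phi_{\theta,x}:\theta\ \text{generic}\}=2\,d(x)$ and $\Sigma(m,n)=4\sum_{x\in E_m\times E_n}d(x)$.

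Finally I would evaluate $\sum_x d(x)$ by incidence counting: it equals $\sum_L|L\cap(E_m\times E_n)|$, summed over lines $L$ containing at least two grid points. On such a line the grid points form an arithmetic progression, so a line with $k$ grid points carries exactly $k-1$ pairs of adjacent points, and every pair of adjacent points spans a unique line; hence Lemma~\ref{lemma_adj} gives $f_1(m,n)=\sum_L 2\bigl(|L\cap(E_m\times E_n)|-1\bigr)=2\sum_L|L\cap(E_m\times E_n)|-2\,l(m,n)$, and \eqref{eq:lmnf1f2} turns this into $\sum_x d(x)=f_1(m,n)-\tfrac12 f_2(m,n)$. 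Combining everything, $\Sigma(m,n)=4f_1(m,n)-2f_2(m,n)$, and dividing by $t(m,n)=f_1(m,n)+2$ gives \eqref{eq:osfff1}. I expect the main difficulty to lie in the third paragraph: checking rigorously that each of the $2\,d(x)$ angular arcs yields a genuinely new threshold function (in particular handling the non-generic directions), together with the small convexity argument underpinning the description of $\phi_{\theta,x}$ in the second paragraph.
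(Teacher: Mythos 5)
Your proposal is correct and is essentially the paper's own proof: your count of $2\,d(x)$ threshold functions having $x$ as an essential zero (via the arcs of normal directions between critical directions) is exactly the paper's count of $2\,l(m,n,i,j)$ such functions via the sectors cut out by the pencil of lines through $(i,j)$, your 2-to-1 symmetry is its doubling between ``essential and zero'' and ``essential and one'', and your final incidence count (a line with $k$ grid points carries $k-1$ adjacent pairs, then Lemma~\ref{lemma_adj}, \eqref{eq:lmnf1f2} and Theorem~\ref{th_tmn}) coincides with the paper's. The difference is only presentational: you parametrize by directions on the circle instead of by the line arrangement through the point, and you spell out more of the verification that the paper dismisses as ``easy to see''.
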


\begin{proof}
Denote by $h(m,n,i,j)$ the number of threshold functions defined on the domain $E_m\times E_n$ such 
that the point $(i,j)$ is essential. By Lemma \ref{lemma_Tunique},
\begin{equation}
\os(m,n) = \frac{1}{t(m,n)} \sum_{g\in \TT(m,n)} |T(g)| = \frac{1}{t(m,n)} \sum_{i=0}^{m-1}\sum_{j=0}^{n-1} h(m,n,i,j).
\label{eq:osmnh}
\end{equation}

\begin{figure} 
\begin{center}

\psset{xunit=1.7cm,yunit=1.7cm}
\begin{pspicture}(-0.75,-1)(4,4)

  \psline[linewidth=1pt](1.35,-0.4)(0.4,3.4)

  \psline[arrows=->](3.5,0)
  \psline[arrows=->](0,3.75)
  \rput(3.65,0.15){$x_1$}
  \rput[r](0,3.8){$x_2$~}
  \psline(-0.2,1)(3.2,1)
  \psline(0,3)(3,3)
  \psline(1,-0.2)(1,3.2)
  \psline(3,0)(3,3)
  \psline(-0.2,-0.2)(3.2,3.2)
  \psline(-0.2,2.2)(2.2,-0.2)
  \psline(-0.1,3.2)(1.6,-0.2)
  \psline(2.1,3.2)(0.4,-0.2)
  \psline(3.2,2.1)(-0.2,0.4)
  \psline(3.2,-0.1)(-0.2,1.6)
 
  \psdots[dotstyle=o](0,3) \psdots[          ](1,3) \psdots[ ](2,3) \psdots[ ](3,3) 
  \psdots[dotstyle=o](0,2) \psdots[          ](1,2) \psdots[ ](2,2) \psdots[ ](3,2) 
  \psdots[dotstyle=o](0,1) \psdots[dotstyle=o](1,1) \psdots[ ](2,1) \psdots[ ](3,1) 
  \psdots[dotstyle=o](0,0) \psdots[dotstyle=o](1,0) \psdots[ ](2,0) \psdots[ ](3,0) 

  \rput(0,-0.2){$0$}
  \rput[r](1,-0.2){$1~$}
  \rput(2,-0.2){$2$}
  \rput(3,-0.2){$3$}
  \rput(-0.15,0.85){$1$}
  \rput(-0.15,1.95){$2$}
  \rput(-0.15,3){$3$}

\rput(0.6,3.4){$\ell'$}

%

\end{pspicture}

\caption{The lines forming $L(4,4,1,1)$ with $l(4,4,1,1) = 8$ and a line $\ell'\notin L(4,4,1,1)$
defining two threshold functions.
}
\label{Fig2}

\end{center}
\end{figure}
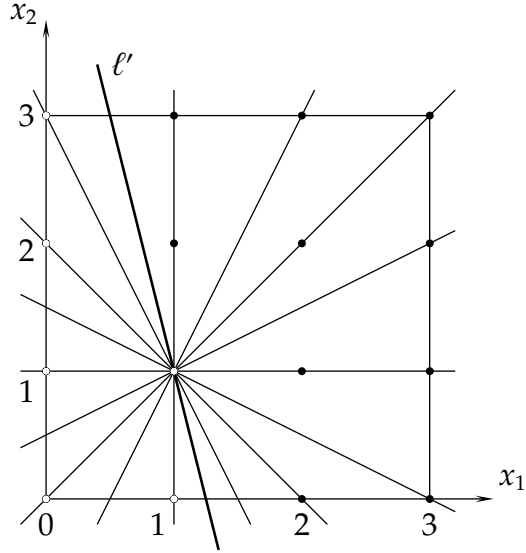

Let us consider all lines containing the point $(i,j)$ and at least one other point from $E_m\times E_n$.
Denote by $L(m,n,i,j)$ the set of all such lines (see Fig.~\ref{Fig2}) and
let $l(m,n,i,j) = |L(m,n,i,j)|$.
The lines from $L(m,n,i,j)$ partition the plane into $2\cdot l(m,n,i,j)$ sectors.

It is easy to see that any line $\ell'\notin L(m,n,i,j)$ containing the point $(i,j)$ is associated with two threshold functions, 
for which the point $(i,j)$ is essential and a zero (Fig.~\ref{Fig2}).
It is clear that these functions are uniquely determined by the two lines from $L(m,n,i,j)$ adjacent to $\ell'$.
Moreover, there is no other function in $\TT(m,n)$, for which the point $(i,j)$ is essential and a zero.

Thus, we have $2\cdot l(m,n,i,j)$ threshold functions for which the point $(i,j)$ is essential and a zero.
We have the same number of functions for which the point $(i,j)$ is essential and not a zero (i.e., the value at $(i,j)$ is $1$).
Hence $h(m,n,i,j) = 4\cdot l(m,n,i,j)$.
Now from \eqref{eq:osmnh} we get
\begin{equation}
\label{eq:osmnh2}
\os(m,n) = \frac{4}{t(m,n)} \sum_{i=0}^{m-1}\sum_{j=0}^{n-1} l(m,n,i,j).
\end{equation}

Let $L(m,n)$ be the set of all lines passing through at least two points from $E_m\times E_n$ and $l(m,n)=|L(m,n)|$.
For a line $\ell \in L(m,n)$, we denote by $z(m,n,\ell)$ the number of points in $E_m\times E_n$ belonging to $\ell$.
Now from \eqref{eq:osmnh2} we get
$$
\os(m,n) 
= \frac{4}{t(m,n)} \sum_{\ell\in L(m,n)} z(m,n,\ell).
$$
We remark that if $\ell$ contains exactly $k$ points from $E_m\times E_n$, then they form $k-1$ adjacent pairs of points.
Then by Lemma~\ref{lemma_adj},
$$
\os(m,n) = 4\cdot \frac{l(m,n)+\frac{1}{2}f_1(m,n)}{t(m,n)}.
$$
Substituting here the expressions \eqref{eq:lmnf1f2} and \eqref{eq:tmnfmn}, we obtain \eqref{eq:osfff1}.
\end{proof}

Theorem~\ref{osmn} and Lemma~\ref{lemma_fqmn} imply the following asymptotics for $\os(m,n)$.

\begin{corollary}\label{cor72} For $m\le n$,
$$
\os(m,n) = \frac{7}{2} + O\left(\frac{1}{m}\right).
$$
\end{corollary}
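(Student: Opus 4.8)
The plan is to substitute the closed-form expression for $\os(m,n)$ from Theorem~\ref{osmn} into the asymptotic estimate of Lemma~\ref{lemma_fqmn} and simplify, taking some care with how the error terms propagate through the division.

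First I would record the two instances of Lemma~\ref{lemma_fqmn} that enter formula~\eqref{eq:osfff1}: for $m\le n$ we have $f_1(m,n) = \frac{6}{\pi^2}(mn)^2 + O(mn^2)$ (taking $q=1$) and $f_2(m,n) = \frac{6}{4\pi^2}(mn)^2 + O(mn^2) = \frac{3}{2\pi^2}(mn)^2 + O(mn^2)$ (taking $q=2$). Feeding these into the numerator of \eqref{eq:osfff1} gives $4 f_1(m,n) - 2 f_2(m,n) = \frac{24}{\pi^2}(mn)^2 - \frac{3}{\pi^2}(mn)^2 + O(mn^2) = \frac{21}{\pi^2}(mn)^2 + O(mn^2)$, while the denominator is $f_1(m,n) + 2 = \frac{6}{\pi^2}(mn)^2 + O(mn^2)$ (the constant $2$ being absorbed into the error).

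Next I would handle the quotient. Writing the numerator as $\frac{21}{\pi^2}(mn)^2\bigl(1 + O(1/m)\bigr)$ and the denominator as $\frac{6}{\pi^2}(mn)^2\bigl(1 + O(1/m)\bigr)$ — here using that the relative error is $O(mn^2)/\Theta\!\bigl((mn)^2\bigr) = O(1/m)$ — the factors $\frac{1}{\pi^2}(mn)^2$ cancel and one is left with $\os(m,n) = \frac{21}{6}\cdot\frac{1+O(1/m)}{1+O(1/m)} = \frac{7}{2}\bigl(1 + O(1/m)\bigr) = \frac{7}{2} + O(1/m)$, where the step $\frac{1+O(1/m)}{1+O(1/m)} = 1+O(1/m)$ is the elementary expansion valid once $m$ is large enough that the denominator's relative error is bounded away from $-1$ (for the finitely many small $m$ the estimate is trivial).

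I do not expect a genuine obstacle here; the only point requiring attention is verifying that the $O(mn^2)$ errors in $f_1$ and $f_2$ are smaller than the main terms $\Theta\!\bigl((mn)^2\bigr)$ by exactly a factor of order $1/m$ — not $1/n$ — which is precisely what yields the claimed $O(1/m)$ rate and explains why the hypothesis $m\le n$ is imposed.
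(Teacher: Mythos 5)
Your proposal is correct and is exactly the argument the paper intends: the corollary is stated as an immediate consequence of Theorem~\ref{osmn} and Lemma~\ref{lemma_fqmn}, and your substitution of $f_1(m,n)=\frac{6}{\pi^2}(mn)^2+O(mn^2)$ and $f_2(m,n)=\frac{3}{2\pi^2}(mn)^2+O(mn^2)$ into \eqref{eq:osfff1}, with the relative error $O(mn^2)/\Theta\bigl((mn)^2\bigr)=O(1/m)$ under $m\le n$, is precisely that computation. Your added care about propagating the error through the quotient (and the trivial bound $\os(m,n)\in[3,4]$ for small $m$) only makes explicit what the paper leaves implicit.
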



Denote by $t_{\kappa}(m,n)$ the number of functions $g\in\TT(m,n)$ such that $|T(g)|=\kappa$ ($\kappa\in\{3,4\}$).
Corollary~\ref{cor72} implies that each of the quantities $t_3(m,n)$ and $t_4(m,n)$ is asymptotically $\frac{1}{2}t(m,n)$.
Below we give exact formulae for $t_3(m,n)$ and $t_4(m,n)$.  

\begin{theorem}\label{th_t3t4}
\begin{eqnarray*}
t_3(m,n) &=& 2\cdot f_2(m,n) + 8,\\
t_4(m,n) &=& f_1(m,n) - 2\cdot f_2(m,n) - 6.
\end{eqnarray*}
\end{theorem}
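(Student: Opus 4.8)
The plan is to deduce both formulas from the results already in hand, namely the exact count $t(m,n)=f_1(m,n)+2$ of Theorem~\ref{th_tmn} and the closed form for the total $\sum_{g\in\TT(m,n)}|T(g)|$ that is obtained along the way in the proof of Theorem~\ref{osmn}. The key observation is that, since $|T(g)|\in\{3,4\}$ for every $g\in\TT(m,n)$, splitting $\TT(m,n)$ into the functions with teaching sets of size $3$ and of size $4$ yields the $2\times2$ linear system
\begin{align*}
t_3(m,n)+t_4(m,n) &= t(m,n),\\
3\,t_3(m,n)+4\,t_4(m,n) &= \sum_{g\in\TT(m,n)}|T(g)|,
\end{align*}
whose solution is uniquely determined once both right-hand sides are known in terms of $f_1$ and $f_2$.

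First I would record the right-hand sides explicitly. By Theorem~\ref{th_tmn}, $t(m,n)=f_1(m,n)+2$. For the second equation, recall that the proof of Theorem~\ref{osmn} shows $\sum_{g\in\TT(m,n)}|T(g)| = \os(m,n)\,t(m,n) = 4\bigl(l(m,n)+\tfrac12 f_1(m,n)\bigr)$; substituting $l(m,n)=\tfrac12\bigl(f_1(m,n)-f_2(m,n)\bigr)$ from \eqref{eq:lmnf1f2} collapses this to $\sum_{g\in\TT(m,n)}|T(g)| = 4 f_1(m,n)-2 f_2(m,n)$. (Equivalently, arguing as in the proof of Theorem~\ref{osmn}, $\sum_g|T(g)|$ equals $4$ times the number of incidences between lines in $L(m,n)$ and points of $E_m\times E_n$, which via adjacent pairs on each line and Lemma~\ref{lemma_adj} gives the same value.)

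Then I would simply solve the system: subtracting $3$ times the first equation from the second gives
$$t_4(m,n) = \bigl(4 f_1(m,n)-2 f_2(m,n)\bigr) - 3\bigl(f_1(m,n)+2\bigr) = f_1(m,n)-2 f_2(m,n)-6,$$
and feeding this back into the first equation yields $t_3(m,n) = f_1(m,n)+2 - t_4(m,n) = 2 f_2(m,n)+8$, which are the asserted formulas.

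There is essentially no obstacle in this argument: the only thing one must notice is that Theorems~\ref{th_tmn} and~\ref{osmn} together already pin down both the number of threshold functions and the aggregate size of all their minimal teaching sets, so the dichotomy $|T(g)|\in\{3,4\}$ turns the averaged statement behind Corollary~\ref{cor72} into the exact counts $t_3(m,n)$ and $t_4(m,n)$. One could instead attempt to count size-$3$ teaching sets through a direct geometric description of the functions $g$ with $|T(g)|=3$, but that would re-derive the inputs above rather than use them, so the linear-system route is the economical one.
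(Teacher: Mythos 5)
Your proposal is correct and follows essentially the same route as the paper: both set up the linear system $t_3+t_4=t(m,n)$ and $3t_3+4t_4=\os(m,n)\,t(m,n)$ and solve it using Theorem~\ref{th_tmn} and the expression \eqref{eq:osfff1} for $\os(m,n)$. The only cosmetic difference is that you unwind $\os(m,n)\,t(m,n)$ back to $4f_1(m,n)-2f_2(m,n)$ before solving, whereas the paper writes the solution in terms of $\os(m,n)$ first and then substitutes.
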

\begin{proof}
From the conditions
$$
t_3(m,n) + t_4(m,n) = t(m,n), \qquad \os(m,n) = \frac{3\cdot t_3(m,n) + 4\cdot t_4(m,n)}{t(m,n)},
$$
we obtain
$$
t_3(m,n) = \left(4 - \os(m,n)\right)\cdot t(m,n), \qquad t_4(m,n) = \left(\os(m,n) - 3\right)\cdot t(m,n).
$$
Substitution of the expressions \eqref{eq:tmnfmn} and \eqref{eq:osfff1} here completes the proof.
\end{proof}


\section{Minimal teaching sets of size $3$}

It is known that a minimum teaching set of a non-constant threshold function $g$ with $|T(g)|=4$ always contains two zeros~\cite{Zolotykh2001}.
The threshold functions $g$ with $|T(g)|=3$ can be classified depending on whether their minimal teaching sets contain one or two zeros.
The question of counting threshold functions in these classes was posed and partially answered in~\cite{Alekseyev2010}.
Below we give a complete answer to this question.

Let $u_{\nu,\kappa}(m,n)$ be the number of threshold functions $g$ on $E_m\times E_n$ with $|T(g)|=3$ such that $g(0,0)=\nu$ ($\nu\in\{0,1\}$) and 
a minimum teaching set of $g$ consists of $\kappa$ zeros and $3-\kappa$ ones ($\kappa\in\{1,2\}$). 
The number $u_{0,1}(m,n)$ enumerates so-called \emph{unstable threhold functions}~\cite{Alekseyev2010}:

\begin{lemma}[\cite{Alekseyev2010}] \label{lemma:unstable}
$$ u_{0,1}(m,n) = 2\cdot f_1(\tfrac{m}{2},\tfrac{n}{2})+2 - s(m,n), $$
where
$$s(m,n) = \sum_{\substack{0<i<m\\ 0<j<n\\ \gcd(i,j)=1}} 1,$$
which can be also expressed as
$$s(m,n) = \frac{1}{4}\left( f_1(m,n) - f_1(m-1,n) - f_1(m,n-1) + f_1(m-1,n-1) \right) - 1.$$
\end{lemma}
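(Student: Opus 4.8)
The plan is to characterize the unstable functions geometrically and then reduce their count to a lattice-point enumeration that produces the main term $2f_1(\tfrac{m}{2},\tfrac{n}{2})$ together with the corrections $+2$ and $-s(m,n)$; the alternative formula for $s(m,n)$ is a short separate computation.

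First I would describe the functions counted by $u_{0,1}(m,n)$ through the convex hulls $\mathrm{conv}(M_0(g))$ and $\mathrm{conv}(M_1(g))$. Using the known description of $T(g)$ via the supporting (limiting separating) lines of these hulls, together with the dichotomy $|T(g)|\in\{3,4\}$, such a $g$ has $T(g)=\{z,o_1,o_2\}$, where $z$ is a zero, $o_1,o_2$ are ones, the separating line of $g$ may be taken through $z$ and through no other grid point, $o_1,o_2$ are the two grid points of $M_1(g)$ closest to that line, and every other zero is non-essential (``blocked''). Passing to skew coordinates at $z$ with the basis $u=o_1-z$, $v=o_2-z$, one obtains $\det(u,v)=\pm1$ (so $u,v$ is a basis of the integer lattice, $u-v$ is primitive, and $o_1,o_2$ are adjacent), no grid point has skew coordinates $(a,b)$ with $ab<0$, and $M_0(g)=\{\,z+au+bv:\ a\le0,\ b\le0\,\}$, while $g(0,0)=0$ just says $(0,0)$ has skew coordinates $a\le0,\ b\le0$. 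The substance of this step is proving both implications, especially that no zero $z+au+bv$ with $a\le0,\ b\le0,\ (a,b)\ne(0,0)$ is essential --- the segment from it to $o_1$ or to $o_2$ passes through another zero, and for $(0,0)$ this uses that $z$ itself lies inside the triangle $(0,0)\,o_1\,o_2$ --- and, conversely, that every triple $(z,u,v)$ with these properties yields such a $g$.

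Next I would count the admissible triples. Put $w=u-v$. Since $E_m\times E_n$ is a rectangle containing $z$ but meeting neither of the two opposite open skew quadrants $\{ab<0\}$, the lines through $z$ in the directions $u$ and $v$ split it into the piece $\{a\le0,\ b\le0\}$ (which is $M_0$), the piece $\{a\ge0,\ b\ge0\}$ (which is $M_1$), and two lattice-point-free slivers; one checks that the binding emptiness conditions are simply $z+w\notin E_m\times E_n$ and $z-w\notin E_m\times E_n$, the deeper lattice points of the slivers being excluded by convexity of the rectangle. The function $g$ is then determined by $(z,w)$ together with the side of the separating line (the line through $z$ in direction $w$) on which $(0,0)$ lies, and analysing the admissible positions of $z$ relative to the four corners of the rectangle shows that, apart from finitely many degenerate configurations, the pair $(z,w)$ is forced into configurations of half the linear size: informally, $z$ ranges over a copy of the half grid and $w$ over a direction to an adjacent grid point there, which by Lemma~\ref{lemma_adj} contributes $f_1(\tfrac{m}{2},\tfrac{n}{2})$ for each of the two orientations of $w$, hence $2f_1(\tfrac{m}{2},\tfrac{n}{2})$ in total. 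The summand $+2$ accounts for the two degenerate configurations in which $\{u,v\}$ equals, up to sign, the standard basis (the separating line grazing a corner), and $-s(m,n)$ is the inclusion--exclusion correction deleting the configurations in which the separating line passes through the origin --- there are exactly $s(m,n)$ of these, one per primitive direction from $(0,0)$ into the open rectangle. As a consistency check, $t_3(m,n)=2f_2(m,n)+8$ (Theorem~\ref{th_t3t4}) together with the symmetry $g\mapsto1-g$, which gives $u_{0,1}=u_{1,2}$ and $u_{0,2}=u_{1,1}$ and hence $u_{0,1}+u_{0,2}=f_2(m,n)+4$, is compatible with the stated value; note also that merely counting the $2\,l(m,n,0,0)=2(s(m,n)+2)$ functions for which $(0,0)$ is essential and a zero (the construction in the proof of Theorem~\ref{osmn}) captures only the unstable functions whose teaching-set zero is $(0,0)$ itself, which is a proper subfamily. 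The main obstacle is exactly this counting: turning ``the rectangle $E_m\times E_n$ avoids the two opposite thin skew quadrants at an interior point $z$'' into an exact enumeration in bijection with the ordered adjacent pairs of the half-size grid, with the half-integer interpretation of $f_1(\tfrac{m}{2},\tfrac{n}{2})$ when $m$ or $n$ is odd and with the boundary and degenerate configurations tracked precisely enough to produce exactly $+2$ and $-s(m,n)$; this is where essentially all the work of~\cite{Alekseyev2010} resides.

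Finally, the alternative expression for $s(m,n)$ follows by a direct computation. Extend all four sums in $f_1(m,n)-f_1(m-1,n)-f_1(m,n-1)+f_1(m-1,n-1)$ to the common index set $\{|i|\le m-1,\ |j|\le n-1,\ \gcd(i,j)=1\}$; this changes nothing, since every added term carries a factor $m-1-|i|$ or $n-1-|j|$ that vanishes on it. Writing $A=m-|i|$, $B=n-|j|$, one has $AB-(A-1)B-A(B-1)+(A-1)(B-1)=1$, so the alternating sum equals $\#\{|i|\le m-1,\ |j|\le n-1,\ \gcd(i,j)=1\}$. The terms with $ij\ne0$ contribute $4s(m,n)$ (four sign choices for each coprime pair in the open rectangle), and the four points $(\pm1,0),(0,\pm1)$ contribute $4$, so the alternating sum equals $4s(m,n)+4$; dividing by $4$ and subtracting $1$ gives the claimed formula.
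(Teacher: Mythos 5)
Your proposal does not actually prove the main formula; it only proves the secondary identity. The last paragraph (extending the four sums to the common index set $\{|i|\le m-1,\,|j|\le n-1,\,\gcd(i,j)=1\}$, the telescoping $AB-(A-1)B-A(B-1)+(A-1)(B-1)=1$, and the count $4s(m,n)+4$) is correct and complete. But for the identity $u_{0,1}(m,n)=2f_1(\tfrac{m}{2},\tfrac{n}{2})+2-s(m,n)$ everything decisive is asserted rather than derived: the structural characterization of unstable functions (separating line through the single essential zero $z$ and no other grid point, adjacency of $o_1,o_2$, unimodularity of $(u,v)$, emptiness of the two opposite skew quadrants, and the converse that every such triple yields an unstable $g$) is stated with the remark that ``the substance of this step is proving both implications,'' and the enumeration itself is carried only at the level of ``informally, $z$ ranges over a copy of the half grid,'' ``one checks,'' ``analysing the admissible positions \dots shows,'' ``apart from finitely many degenerate configurations.'' You then concede explicitly that this enumeration ``is where essentially all the work of \cite{Alekseyev2010} resides'' --- which is precisely the content of the lemma, so the proof has a gap exactly where the theorem lives. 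Note that the paper itself offers no proof here (the lemma is quoted from \cite{Alekseyev2010}), so a self-contained argument would have to supply this count in full, including a precise meaning and use of $f_1(\tfrac{m}{2},\tfrac{n}{2})$ for odd $m$ or $n$, where Lemma~\ref{lemma_adj} (adjacent pairs) no longer applies literally; you flag this issue but do not resolve it.

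Beyond incompleteness, the informal bookkeeping of the correction terms looks shaky. On $E_2\times E_2$ the unique unstable function is the one with $M_0(g)=\{(0,0)\}$: its triple has $u,v$ equal to the standard basis \emph{and} its separating line can be taken through the origin, so under your accounting it appears both among the ``$+2$ degenerate configurations'' and among the $s(m,n)$ configurations to be deleted; meanwhile the opposite-corner standard-basis configuration yields the constant function, which has $|T(g)|=4$ and should not be counted at all. The total $0+2-1=1$ happens to match, but the described correspondence (main term from adjacent pairs of a half-size grid, $+2$ for standard-basis configurations, $-s(m,n)$ for lines through the origin) is not shown to be a bijection and, as stated, does not cleanly classify even this smallest case. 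The consistency check against $t_3(m,n)=2f_2(m,n)+8$ only constrains the sum $u_{0,1}+u_{0,2}$ and cannot certify the split. To turn this into a proof you would need to (i) establish both directions of the geometric characterization, and (ii) produce an exact, verified enumeration of the admissible triples $(z,u,v)$ yielding precisely $2f_1(\tfrac{m}{2},\tfrac{n}{2})+2-s(m,n)$, with all boundary and parity cases handled.
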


\begin{theorem}
$$
\begin{array}{lllll}
u_{0,1}(m,n) &=& u_{1,2}(m,n) &=& 2\cdot f_1(\tfrac{m}{2},\tfrac{n}{2})+2 - s(m,n),\\
u_{0,2}(m,n) &=& u_{2,1}(m,n) &=& f_2(m,n) + 2 - 2\cdot f_1(\tfrac{m}{2},\tfrac{n}{2}) + s(m,n).
\end{array}
$$
Furthermore, for any $\nu\in\{0,1\}$, $\kappa\in\{1,2\}$, and $m\leq n$, we have
$$u_{\nu,\kappa}(m,n) = \frac{3}{4\pi^2}(mn)^2 + O(mn^2).$$
\end{theorem}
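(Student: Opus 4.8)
The plan is to prove the four exact identities first and then read off the asymptotics from Lemma~\ref{lemma_fqmn}. The key structural fact is the symmetry $x\mapsto(m-1-x_1,n-1-x_2)$, which is an involution on $E_m\times E_n$ and swaps the roles of zeros and ones while sending $(0,0)$ to $(m-1,n-1)$. Combined with complementation $g\mapsto h$ (where $M_0(h)=E_m\times E_n\setminus M_0(g)$), this gives a bijection between the functions counted by $u_{0,1}(m,n)$ and those counted by $u_{1,2}(m,n)$: complementing turns the single zero in $T(g)$ into a single one and the two ones into two zeros, and it flips the value at $(0,0)$ from $0$ to $1$; moreover $|T(g)|=3$ is preserved since $T(h)=T(g)$ as sets (the minimal teaching set is the set of essential points, and essentiality is complementation-invariant). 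Hence $u_{0,1}=u_{1,2}$, and the formula for both follows from Lemma~\ref{lemma:unstable}. The same argument gives $u_{0,2}=u_{2,1}$.

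Next I would establish the identity for $u_{0,2}(m,n)$ by a counting argument over the point $(0,0)$. Fix $(i,j)=(0,0)$ in the proof of Theorem~\ref{osmn}: the functions for which $(0,0)$ is essential and a zero number $2\cdot l(m,n,0,0)$, and these split according to whether $T(g)$ has size $3$ or $4$. A function $g$ with $(0,0)$ essential-and-a-zero has $|T(g)|=3$ with $(0,0)$ among the two zeros exactly when the defining "wedge" at $(0,0)$ is extremal in a suitable sense; I would make this precise using the same sector/adjacent-line picture as in Theorem~\ref{osmn} and the characterisation from~\cite{Zolotykh2001} that a size-$4$ teaching set has exactly two zeros. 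An alternative and cleaner route: use the already-proved total $t_3(m,n)=2f_2(m,n)+8$ from Theorem~\ref{th_t3t4} together with the partition
\begin{equation*}
u_{0,1}+u_{0,2}+u_{1,1}+u_{1,2} = t_3(m,n),
\end{equation*}
where $u_{1,1}(m,n)$ counts size-$3$ functions with $g(0,0)=1$ and one zero in $T(g)$. By the involution-plus-complementation symmetry applied in the form $x\mapsto(m-1-x_1,n-1-x_2)$ alone (value at $(0,0)$ flips, zeros stay zeros), we get $u_{1,1}=u_{0,1}$ and $u_{2,1}=u_{0,2}$, hence already $u_{1,1}=u_{0,1}=u_{1,2}$ and $u_{0,2}=u_{2,1}=u_{1,1}$... so care is needed: one must check that complementation and the point-reflection act independently, giving a Klein four-group action whose orbit structure forces $u_{0,1}=u_{1,1}=u_{0,2}=u_{1,2}$ only if the two involutions coincide, which they do not. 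Resolving this orbit bookkeeping correctly is the first place to be careful; the correct conclusion is $u_{0,1}=u_{1,2}$ and $u_{0,2}=u_{2,1}$ (point reflection composed with complementation), while point reflection alone gives $u_{\nu,\kappa}(m,n)=u_{1-\nu,\kappa}(m,n)$, so in fact all four quantities with the same $\kappa$ are equal. Then $2u_{0,1}+2u_{0,2}=t_3(m,n)=2f_2(m,n)+8$ yields $u_{0,2}=f_2(m,n)+4-u_{0,1}=f_2(m,n)+4-\big(2f_1(\tfrac m2,\tfrac n2)+2-s(m,n)\big)=f_2(m,n)+2-2f_1(\tfrac m2,\tfrac n2)+s(m,n)$, exactly as claimed.

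For the asymptotics, I would substitute the estimates from Lemma~\ref{lemma_fqmn} (with $q=1,2$) and the known growth of $f_1(\tfrac m2,\tfrac n2)$ and $s(m,n)$. From Lemma~\ref{lemma_fqmn}, $f_1(\tfrac m2,\tfrac n2)=\tfrac{6}{\pi^2}\big(\tfrac{mn}{4}\big)^2+O(mn^2)=\tfrac{3}{8\pi^2}(mn)^2+O(mn^2)$, so $u_{0,1}(m,n)=2\cdot\tfrac{3}{8\pi^2}(mn)^2+O(mn^2)=\tfrac{3}{4\pi^2}(mn)^2+O(mn^2)$, using that $s(m,n)=O(mn)$ (it is at most $mn$). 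For $u_{0,2}$, $f_2(m,n)=\tfrac{6}{4\pi^2}(mn)^2+O(mn^2)=\tfrac{3}{2\pi^2}(mn)^2+O(mn^2)$ and $2f_1(\tfrac m2,\tfrac n2)=\tfrac{3}{4\pi^2}(mn)^2+O(mn^2)$, so $u_{0,2}(m,n)=\big(\tfrac{3}{2\pi^2}-\tfrac{3}{4\pi^2}\big)(mn)^2+O(mn^2)=\tfrac{3}{4\pi^2}(mn)^2+O(mn^2)$. Since $u_{1,2}=u_{0,1}$ and $u_{2,1}=u_{0,2}$, all four are $\tfrac{3}{4\pi^2}(mn)^2+O(mn^2)$. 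The main obstacle, as flagged above, is getting the symmetry bookkeeping exactly right — in particular verifying that point reflection preserves "$\kappa$ zeros in $T(g)$" (true, since it permutes $E_m\times E_n$ and preserves the value $0$) and flips $g(0,0)$, which is what collapses the eight a priori distinct counts down to the two values stated; after that the arithmetic is routine.
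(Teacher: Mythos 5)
Your ``cleaner route'' is, in substance, the paper's own proof: complementation $g\mapsto 1-g$ preserves the set of essential points (hence $T(g)$ as a set) and swaps zeros and ones, giving $u_{\nu,\kappa}(m,n)=u_{1-\nu,3-\kappa}(m,n)$; together with $u_{0,1}+u_{0,2}+u_{1,1}+u_{1,2}=t_3(m,n)$ this yields $u_{0,1}+u_{0,2}=\tfrac12 t_3(m,n)$, and then Theorem~\ref{th_t3t4} plus Lemma~\ref{lemma:unstable} give the exact formulas, with the asymptotics from Lemma~\ref{lemma_fqmn} and $s(m,n)=O(mn)$. That chain, and your arithmetic, are correct.

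However, your symmetry bookkeeping contains a genuinely false step. You assert that the point reflection $x\mapsto(m-1-x_1,\,n-1-x_2)$ ``flips $g(0,0)$'' and therefore $u_{\nu,\kappa}(m,n)=u_{1-\nu,\kappa}(m,n)$, so that all four counts coincide. The reflection sends $g$ to $g'(x_1,x_2)=g(m-1-x_1,n-1-x_2)$, so $g'(0,0)=g(m-1,n-1)$, which is in general unrelated to $1-g(0,0)$ (the origin is not a fixed point of the reflection; e.g.\ a threshold function whose only one is the corner $(m-1,0)$ has $g(0,0)=g(m-1,n-1)=0$). Moreover the conclusion itself is false and contradicts the very formulas you go on to derive: combined with complementation it would force $u_{0,1}=u_{0,2}$, whereas for $m=n=2$ a direct check gives $u_{0,1}(2,2)=1$ but $u_{1,1}(2,2)=u_{0,2}(2,2)=3$ (consistent with the stated formulas, since $f_1(1,1)=0$, $f_2(2,2)=0$, $s(2,2)=1$). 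The same slip appears in your opening paragraph, where the bijection proving $u_{0,1}=u_{1,2}$ is attributed to reflection ``combined with'' complementation; composing with the reflection would again move the origin and ruin the $\nu$-bookkeeping. Fortunately none of this is needed: complementation alone gives $u_{1,1}=u_{0,2}$ and $u_{1,2}=u_{0,1}$, hence $2\bigl(u_{0,1}+u_{0,2}\bigr)=t_3(m,n)$, and once every appeal to the point reflection is deleted your argument reduces exactly to the paper's proof and is correct.
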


\begin{proof} Trivially, we have $u_{0,1}(m,n) + u_{0,2}(m,n) + u_{1,1}(m,n) + u_{1,2}(m,n) = t_3(m,n)$. 
Furthermore, the bijection $g\mapsto 1-g$ implies that $u_{\nu,\kappa}(m,n) = u_{1-\nu,3-\kappa}(m,n)$ and thus we can focus only on the case $\nu=0$, 
for which $u_{0,1}(m,n) + u_{0,2}(m,n) = \tfrac{1}{2}t_3(m,n)$.
The exact formulae now easily follow from Theorem~\ref{th_t3t4} and Lemma~\ref{lemma:unstable}. To obtain the asymptotics, we use Lemma~\ref{lemma_fqmn} 
and notice that $s(m,n) = O(mn)$. 
\end{proof}

\section{Special arrangements of lines}

\subsection{Partition of the plane}

For a threshold function $g\in\TT(m,n)$, all vectors $(a_0, a_1, a_2)\in\mathbb{R}^3$, where  $a_0, a_1, a_2$ are coefficients of its separating line,
form a polyhedral cone defined by the following system of linear inequalities:
\begin{equation}
\begin{cases}
	a_1 x_1 + a_2 x_2 \le a_0 & \mbox{for each $(x_1,x_2)\in M_0(g)$}, \\
	a_1 x_1 + a_2 x_2 >   a_0 & \mbox{for each $(x_1,x_2)\in M_1(g)$}. 
\end{cases}
\label{eq:Kf}
\end{equation}
Furthermore, a minimal teaching set $T(g)$ consists of the points from $E_m\times E_n$
that correspond to irredundant inequalities in \eqref{eq:Kf}.
Thus, there is a bijection of the set $\TT(m,n)$ into the set of cones in the partition of the space of parameters $\{ (a_0, a_1, a_2)\in\mathbb{R}^3 \}$
by all the planes $a_1 x_1 + a_2 x_2 = a_0$, where $(x_1,x_2)\in E_m\times E_n\setminus \{(0,0)\}$. 
Moreover, the planes that form the boundary of each of these cones correspond to the points in $T(g)$.
This construction is well known in the threshold logic~\cite{ShevchenkoZolotykh1998,ZolotykhShevchenko1999}.

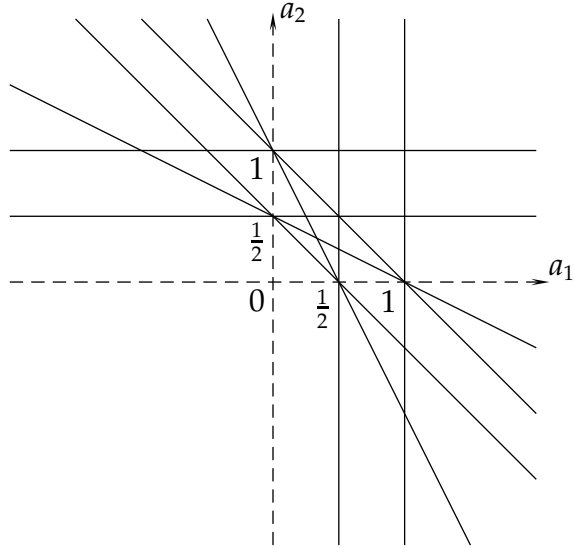
\begin{figure} 
\begin{center}

\psset{xunit=1.75cm,yunit=1.75cm}
\begin{pspicture}(-2,-2.1)(2,2)

  \psline[arrows=->,linestyle=dashed](-2,0)(2.1,0)
  \psline[arrows=->,linestyle=dashed](0,-2)(0,2.05)
  \rput[lb](2.1,0.015){$a_1$}
  \rput[l](0,2.05){~$a_2$~}
  
\psline(-1,2)(2,-1)  
\psline(-2,1)(2,1)  
\psline(-0.5,2)(1.5,-2)  

\psline(-2,0.5)(2,0.5)
\psline(-2,1.5)(2,-0.5)  
\psline(-1.5,2)(2,-1.5)  

\psline(0.5,2)(0.5,-2)
\psline(1,2)(1,-2)

\rput[tr](1,-0.055){$1$~}
\rput[tr](0.5,-0.055){$\frac{1}{2}$~}
\rput[tr](0,-0.055){$0$~}

\rput[rt](0,0.95){$1$~}
\rput[rt](0,0.45){$\frac{1}{2}$~}
 

\end{pspicture}

\caption{The partition of the plane $\{ (a_1, a_2)\in\mathbb{R}^2 \}$ by the lines $a_1 x_1 + a_2 x_2 = 1$, where $(x_1,x_2)\in E_3\times E_3\setminus \{(0,0)\}$.
There are $c(3,3)=29$ cells, $c_3(3,3)=20$ generalized 3-polygons, $c_4(3,3)=9$ generalized 4-polygons, $e(3,3)=43$ edges, and $v(3,3)=15$ vertices in this partition.}
\label{Fig3}

\end{center}
\end{figure}

For $\nu\in\{0,1\}$, let $\TT_{\nu}(m,n) = \{g\in \TT(m,n):~ g(0,0)=\nu \}$.
The mapping $g \mapsto 1-g$ represents a bijection between the sets $\TT_0(m,n)$ and $\TT_1(m,n)$,
for which the teaching sets are invariant.

Without loss of generality, for $g\in\TT_0(m,n)$, we can assume that $a_0=1$.
In this case the points $(a_1, a_2)\in\mathbb{R}^2$ such that the line $a_1 x_1 + a_2 x_2 = 1$ is separating for $g$ represent the solutions
to the following system of linear inequalities:
$$
\begin{cases}
	a_1 x_1 + a_2 x_2 \le 1 & \mbox{for all $(x_1,x_2)\in M_0(g)$}, \\
	a_1 x_1 + a_2 x_2 >   1 & \mbox{for all $(x_1,x_2)\in M_1(g)$}.
\end{cases}
$$
So we have the bijection of the set $\TT_0(m,n)$ into the set of all polygonal cells in the partition of the plane $\{ (a_1, a_2)\in\mathbb{R}^2 \}$
by the lines $a_1 x_1 + a_2 x_2 = 1$, where $(x_1,x_2)\in E_m\times E_n\setminus \{(0,0)\}$ (see\,Fig.~\ref{Fig3}). 


Let $c(m,n)$, $e(m,n)$, $v(m,n)$ be respectively the number of cells, edges, 
and vertices (i.e., intersection points) in this partition.
We call a convex polygon (possibly unbounded) by a \emph{generalized $k$-polygon} if it can be obtained as the intersection of
an $k$-faced polyhedral cone with a plane that does not contain the cone vertex. 
Every unbounded generalized $k$-polygon representing a cell in the partition has two parallel or two non-parallel \emph{infinite edges} (i.e., rays). 
In the former case, the polygon has $k-1$ vertices; in the latter case, it has $k-2$ vertices.

\begin{theorem}\label{th_partition1}
The cells in the partition of the plane $\{ (a_1, a_2)\in\mathbb{R}^2 \}$ by the lines $a_1x_1 + a_2x_2 = 1$, where 
$(x_1,x_2)\in E_m\times E_n$, are only generalized $3$- and $4$-polygons. 
The number of such cells is, respectively, 
\begin{equation}\label{eq_v3v4}
\begin{array}{lllllll}
c_3(m,n) &=& \frac{1}{2}t_3(m,n) &=& f_2(m,n) + 4 &=& \frac{3}{2\pi^2} m^2n^2 + O(mn^2), \\
c_4(m,n) &=& \frac{1}{2}t_4(m,n) &=& \frac{1}{2}f_1(m,n) - f_2(m,n) - 3 &=& \frac{3}{2\pi^2} m^2n^2 + O(mn^2),
\end{array}
\end{equation}
which imply that
\begin{equation}\label{eq_v}
c(m,n) = c_3(m,n)+c_4(m,n) = \frac{1}{2} f_1(m,n) + 1 = \frac{3}{\pi^2} m^2n^2 + O(mn^2).
\end{equation}
Moreover, 
\begin{equation}
e(m,n) = f_1(m,n) -\frac{1}{2} f_2(m,n) - s(m,n) - 2 = \frac{21}{4\pi^2} m^2n^2 + O(mn^2),
\label{eq:edges_plane}
\end{equation}
\begin{equation}
v(m,n) = \frac{1}{2} f_1(m,n) -\frac{1}{2} f_2(m,n) - s(m,n) - 2 = \frac{9}{4\pi^2} m^2n^2 + O(mn^2),
\label{eq:points_plane}
\end{equation}
where $s(m,n)$ is defined in Lemma~\ref{lemma:unstable}.
Everywhere above the asymptotics holds for $m\le n$.
\end{theorem}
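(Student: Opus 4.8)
The plan is to combine the bijection between $\TT_0(m,n)$ and the cells of the plane partition with the already-established counts $t_3(m,n), t_4(m,n)$ from Theorem~\ref{th_t3t4}, and then use Euler's formula for planar subdivisions to extract $e(m,n)$ and $v(m,n)$. First I would verify that every cell is a generalized $3$- or $4$-polygon: a cell corresponds to a cone in the parameter space $\{(a_0,a_1,a_2)\}$ bounded by the planes through $T(g)$, and since $|T(g)|\in\{3,4\}$ by the Shevchenko--Zolotykh result, the cone has $3$ or $4$ faces, so its planar section is a generalized $3$- or $4$-polygon. The counts $c_\kappa(m,n) = \tfrac12 t_\kappa(m,n)$ follow because the bijection $g\mapsto 1-g$ between $\TT_0$ and $\TT_1$ preserves $|T(g)|$, so exactly half of the functions with teaching set of size $\kappa$ lie in $\TT_0(m,n)$; substituting the formulas of Theorem~\ref{th_t3t4} gives the explicit expressions and, via Lemma~\ref{lemma_fqmn}, the asymptotics. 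Summing yields $c(m,n) = \tfrac12 f_1(m,n) + 1$ after the $+8$ and $-6$ constants and the $\pm 2 f_2$ terms cancel.

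For the edges and vertices I would set up a careful Euler-characteristic bookkeeping for the (non-finite) arrangement of the $|E_m\times E_n \setminus\{(0,0)\}|$ lines $a_1x_1+a_2x_2=1$ in the plane. The vertices $v(m,n)$ are the intersection points of these lines; a point $(i,j)\in E_m\times E_n\setminus\{(0,0)\}$ and a point $(i',j')$ give lines meeting in a common point iff $(0,0),(i,j),(i',j')$ are collinear in the original grid, so the vertices of the partition are in correspondence with triples (more precisely pairs of distinct lines) and ultimately with lines through the origin in the grid $E_m\times E_n$ weighted by how many grid points they carry. I would express $v(m,n)$ through $l(m,n,0,0)$ and the analogous quantities counted in the proof of Theorem~\ref{osmn}; alternatively, and more robustly, I would avoid re-deriving the combinatorics of collinearities from scratch by using Euler's relation: compactify the plane to a sphere (add one point at infinity, or handle the unbounded directions as a cycle at infinity), so that $V - E + F = 2$ with $V = v(m,n) + (\text{vertices at infinity})$, $F = c(m,n) + 1$, and $E = e(m,n) + (\text{edges at infinity})$. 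Combined with the degree-sum relations — each finite vertex has even degree $\ge 4$, and the relation between edge-endpoints, vertex degrees, and the $\pm$ infinite edges of each cell (two parallel infinite edges when the cell has $k-1$ vertices, two non-parallel ones when it has $k-2$) — this gives one linear equation tying $e$, $v$, $c_3$, $c_4$, and the number of unbounded cells together. A second equation comes from counting cell–edge incidences: $\sum_{\text{cells}} (\#\text{edges of cell}) = 2e(m,n) - (\text{number of infinite edges})$, and the left side is computable from $c_3, c_4$ and the split of unbounded cells into the "parallel" and "non-parallel" types. Solving the resulting small linear system for $e(m,n)$ and $v(m,n)$, and recognizing the combinatorial quantity that reduces to $s(m,n)$ (the count of primitive pairs $0<i<m$, $0<j<n$, governing the number of pairs of lines that are parallel, hence the count of unbounded cells of the "parallel" type), should produce exactly \eqref{eq:edges_plane} and \eqref{eq:points_plane}. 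The asymptotics then follow termwise from Lemma~\ref{lemma_fqmn} together with $s(m,n)=O(mn)$.

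The main obstacle I expect is the careful treatment of the behavior at infinity: pinning down exactly how many unbounded cells there are, how they split between the "two parallel rays" and "two non-parallel rays" types, and how many infinite edges and vertices-at-infinity to add when applying Euler's formula on the one-point (or circle) compactification. This is where the quantity $s(m,n)$ enters — two lines $a_1x_1+a_2x_2=1$ and $a_1x_1+a_2x_2=1$ from grid points $(i,j)$ and $(i',j')$ are parallel iff $(i,j)$ and $(i',j')$ are proportional, i.e. lie on a common ray from the origin, so the number of distinct directions among the lines, and hence the number of "ends" of the arrangement, is controlled by counting primitive grid points in a quadrant, which is precisely what $s(m,n)$ records. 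Once the at-infinity accounting is fixed, everything else is a finite linear-algebra computation using $f_1$, $f_2$, $s$, and the already-proved Theorems~\ref{th_tmn} and~\ref{th_t3t4}. As a sanity check I would confirm the displayed numbers for $m=n=3$ in Fig.~\ref{Fig3}: $c(3,3)=29$, $c_3=20$, $c_4=9$, $e(3,3)=43$, $v(3,3)=15$, which also lets me verify $V-E+F$ on the sphere and catch any off-by-one in the infinite part.
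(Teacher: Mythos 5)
Your overall plan coincides with the paper's: identify the cells with $\TT_0(m,n)$, get $c_\kappa(m,n)=\tfrac12 t_\kappa(m,n)$ from the teaching-set-preserving bijection $g\mapsto 1-g$ and Theorem~\ref{th_t3t4}, then recover $e(m,n)$ and $v(m,n)$ from a cell--edge incidence count plus Euler's formula, with $s(m,n)$ entering through the count of parallel classes of lines. The first half (the cell counts and their asymptotics via Lemma~\ref{lemma_fqmn}) is complete and correct.

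The gap is in the second half, which is the only genuinely new content of this theorem: you leave the at-infinity bookkeeping unresolved, and the one explicit relation you do write down, $\sum_{\text{cells}}(\#\text{edges of cell}) = 2e(m,n) - (\text{number of infinite edges})$, is wrong as stated. If ``number of edges of a cell'' means the $k$ of the generalized $k$-polygon, the correct identity is $3c_3(m,n)+4c_4(m,n) = 2e(m,n) + 2v_\infty(m,n)$, where $v_\infty(m,n)=s(m,n)+2$ is the number of parallel classes of the lines (equivalently, of points on the line at infinity): every affine edge (segment or ray) borders exactly two cells, and in addition each of the $v_\infty$ infinitely-distant edges is shared by exactly two unbounded cells of the non-parallel type, so the correction enters with a plus sign and a factor $2$, not a minus sign. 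If instead you count only affine edges per cell, the sum equals $2e(m,n)$ exactly, but then it is no longer $3c_3+4c_4$, since each non-parallel-type unbounded generalized $k$-polygon has only $k-1$ affine edges. Your own test case exposes the discrepancy: for $m=n=3$ one has $3c_3+4c_4=96=2\cdot 43+2\cdot 5$, whereas your relation would force $e(3,3)\ge 48$ rather than $43$. Deferring the sign and the exact count of vertices/edges at infinity to a later sanity check against Fig.~\ref{Fig3} is not a proof of \eqref{eq:edges_plane}; you need to fix the incidence identity (and the value $v_\infty = s(m,n)+2$) first, after which Euler's relation $v(m,n)-e(m,n)+c(m,n)=1$ gives \eqref{eq:points_plane} exactly as in the paper.
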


\begin{proof}
The number of cells is $c(m,n)=|\TT_0(m,n)| = \frac{1}{2} t(m,n)$.
Among them there are $c_3(m,n)=\frac{1}{2}t_3(m,n)$ generalized $3$-polygons and $c_4(m,n)=\frac{1}{2}t_4(m,n)$ generalized $4$-polygons,
since the mapping $g \mapsto 1-g$ is a bijection between $\TT_0(m,n)$ and $\TT_1(m,n)$,
for which the teaching sets are invariant.
To obtain \eqref{eq_v3v4} and \eqref{eq_v} it remains to apply Theorem \ref{th_t3t4} and Lemma \ref{lemma_fqmn}.

Let $v_{\infty}(m,n)$ be the number of \emph{infinite vertices} (i.e., families of parallel lines) in the partition, 
which also equals the total number of \emph{infinitely-distant edges} in the unbounded generalized polygons.
Each family of parallel lines is defined by the slope, which may be zero (for the horizonal lines), infinite (for the vertical lines), 
or represent an irreducible fraction $\tfrac{p}{q}$ with $1\leq p\leq m-1$ and $1\leq q\leq n-1$. Since the number of such fractions is given by $s(m,n)$,
we have $v_{\infty}(m,n) = s(m,n) + 2$.


Since $3c_3(m,n) + 4c_4(m,n)$ gives twice the number of edges including infinitely-distant ones, we have
$
3c_3(m,n) + 4c_4(m,n) = 2e(m, n) + 2v_{\infty}(m,n),
$
which together with \eqref{eq_v3v4} implies \eqref{eq:edges_plane}.

To find the number of vertices, we apply Euler characteristic for the plane: 
$
v(m,n) - e(m,n) + c(m,n) = 1
$
and use \eqref{eq_v} and \eqref{eq:edges_plane} to obtain \eqref{eq:points_plane}.
\end{proof}


\subsection{Partition of the triangle}

Now we consider the triangle with vertices $(0,0)$, $(1,0)$, ($0,1)$ in the plane 
$\{ (a_1, a_2)\in\mathbb{R}^2 \}$ and its partition by the lines $a_1x_1 + a_2x_2 = 1$, where 
$(x_1,x_2)\in\{1,2,\dots,m\}\times\{1,2,\dots,n\}$ (Fig.\,\ref{Fig4}).
Let $c^{\triangle}(m,n)$, $e^{\triangle}(m,n)$, $v^{\triangle}(m,n)$ be respectively
the number of cells, edges, and vertices in this partition.

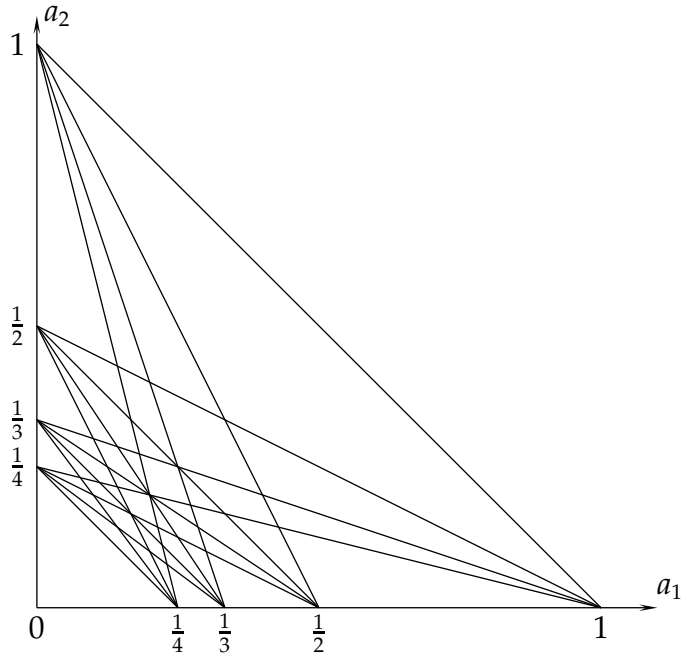
\begin{figure} 
\begin{center}

\psset{xunit=7.5cm,yunit=7.5cm}
\begin{pspicture}(0,-0.1)(1,1)

  \psline[arrows=->](1.1,0)
  \psline[arrows=->](0,1.05)
  \rput[lb](1.1,0.015){$a_1$}
  \rput[l](0,1.05){~$a_2$~}

\psline(0,1.00000)(1,0)\psline(0,1.00000)(0.5,0)\psline(0,1.00000)(0.333333,0)\psline(0,1.00000)(0.25,0)
\psline(0,0.50000)(1,0)\psline(0,0.50000)(0.5,0)\psline(0,0.50000)(0.333333,0)\psline(0,0.50000)(0.25,0)
\psline(0,0.33333)(1,0)\psline(0,0.33333)(0.5,0)\psline(0,0.33333)(0.333333,0)\psline(0,0.33333)(0.25,0)
\psline(0,0.25000)(1,0)\psline(0,0.25000)(0.5,0)\psline(0,0.25000)(0.333333,0)\psline(0,0.25000)(0.25,0)

\rput[tc](1,-0.035){$1$}
\rput[tc](0.5,-0.045){$\frac{1}{2}$}
\rput[tc](0.33333,-0.045){$\frac{1}{3}$}
\rput[tc](0.25,-0.045){$\frac{1}{4}$}
\rput[tc](0,-0.035){$0$}
\rput[r](-0.02,1){$1$}
\rput[r](-0.02,0.5){$\frac{1}{2}$}
\rput[r](-0.02,0.33333){$\frac{1}{3}$}
\rput[r](-0.02,0.25){$\frac{1}{4}$}
 

\end{pspicture}

\caption{The partition of the triangle by the lines $a_1 x_1 + a_2 x_2 = 1$, where $(x_1,x_2)\in \{1,2,3,4\}\times\{1,2,3,4\}$.
There are $c^{\triangle}(4,4)=47$ cells, $c^{\triangle}_3(4,4)=33$ triangles, $c^{\triangle}_4(4,4)=14$ quadrilaterals, $e^{\triangle}(4,4)=82$ edges,
and $v^{\triangle}(4,4)=36$ vertices in this partition.}
\label{Fig4}

\end{center}
\end{figure}

\begin{theorem}
Every cell in the partition of the triangle with vertices $(0,0)$, $(1,0)$, $(0,1)$ by the lines $a_1x_1 + a_2x_2 = 1$, where 
$(x_1,x_2)\in \{1,2,\dots,m\}\times\{1,2,\dots,n\}$, $m\ge 2$, $n\ge 2$, there is either a triangle or a quadrilateral. 
Moreover, the number of triangles and quadrilaterals is given by the following formulae:
$$
\begin{array}{lllll}
c^{\triangle}_3(m,n) & = & \frac{1}{2} f_2(m,n) + m + n + 1 & = & \frac{3}{4\pi^2} m^2n^2 + O(mn^2), \\
c^{\triangle}_4(m,n) & = & \frac{1}{4} f_1(m,n) - \frac{1}{2} f_2(m,n) - \frac{m}{2} - \frac{n}{2} - 1 & = & \frac{3}{4\pi^2} m^2n^2 + O(mn^2),
\end{array}
$$
which imply that
$$
c^{\triangle}(m,n) = c^{\triangle}_3(m,n) + c^{\triangle}_4(m,n) = \frac{1}{4} f_1(m,n) +\frac{m}{2} + \frac{n}{2} = \frac{3}{2\pi^2} m^2n^2 + O(mn^2).
$$
Moreover,
\begin{equation}
e^{\triangle}(m,n) = \frac{1}{2} f_1(m,n) -\frac{1}{4} f_2(m,n) + m + n = \frac{21}{8\pi^2} m^2n^2 + O(mn^2),
\label{eq:edges_triangle}
\end{equation}
\begin{equation}
v^{\triangle}(m,n) = \frac{1}{4} f_1(m,n) -\frac{1}{4} f_2(m,n) + \frac{m}{2} + \frac{n}{2} + 1 = \frac{9}{8\pi^2} m^2n^2 + O(mn^2).
\label{eq:points_triangle}
\end{equation}
Everywhere above the asymptotics holds for $m\le n$.
\end{theorem}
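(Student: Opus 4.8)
The plan is to carry out the argument of Theorem~\ref{th_partition1}, but inside the closed nonnegative octant of a $3$-dimensional parameter space rather than in the whole plane. First I would pass to the coordinates $y_i=x_i-1$ and $(b_0,b_1,b_2)=(a_0-a_1-a_2,\,a_1,\,a_2)$, so that $b_0+b_1+b_2=a_0$. With $a_0=1$ this sends the triangle $\{a_1,a_2\ge 0,\ a_1+a_2\le 1\}$ to the slice of the octant $C=\{(b_0,b_1,b_2):b_i\ge 0\}$ by the plane $b_0+b_1+b_2=1$, and sends the cutting line $a_1x_1+a_2x_2=1$ (for $(x_1,x_2)\in\{1,\dots,m\}\times\{1,\dots,n\}$) to the plane $b_1y_1+b_2y_2=b_0$ (for $(y_1,y_2)\in E_m\times E_n$). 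Thus the triangle partition is the slice, by $b_0+b_1+b_2=1$, of the partition of $C$ by the planes $b_1y_1+b_2y_2=b_0$, $(y_1,y_2)\in E_m\times E_n$. Comparing with \eqref{eq:Kf}, its cells correspond bijectively to the threshold functions $g\in\TT(m,n)$ whose parameter cone meets the interior of $C$; a routine perturbation argument identifies these with the functions that are monotone non-decreasing in each coordinate and are not identically $1$.

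Next I would split these functions into three families: (A) those depending essentially on both coordinates; (B) the non-constant functions of a single variable, i.e.\ those with $M_1(g)=\{x_1\ge k\}$ ($1\le k\le m-1$) or $M_1(g)=\{x_2\ge k\}$ ($1\le k\le n-1$); and (C) the constant $0$. For $g$ in family (A) every separating line has $a_0,a_1,a_2>0$ (otherwise $g$ would be non-increasing in, or constant in, a coordinate, or would satisfy $g(0,0)=1$), so the closure of the cone~\eqref{eq:Kf} is contained in $C$; hence the cell of $g$ is simply the cross-section of that cone, a bounded generalized $|T(g)|$-polygon --- that is, a triangle if $|T(g)|=3$ and a quadrilateral if $|T(g)|=4$. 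For the finitely many functions of families (B) and (C), computing \eqref{eq:Kf}$\,\cap\,C$ directly shows the cell is always a triangle (and $|T(g)|=4$ for each such $g$). This proves that every cell is a triangle or a quadrilateral, and reduces the count to $c^{\triangle}_3(m,n)=N_3+(m-1)+(n-1)+1$ and $c^{\triangle}_4(m,n)=N_4$, where $N_\kappa$ is the number of monotone non-decreasing functions of family (A) with $|T(g)|=\kappa$. To evaluate $N_3$ and $N_4$ I would use that the reflections $x_1\mapsto m-1-x_1$ and $x_2\mapsto n-1-x_2$ generate a $\mathbb{Z}_2\times\mathbb{Z}_2$-action that preserves $|T(g)|$ and family (A) and acts \emph{freely} on family (A) (a function of (A) has a well-defined pair of signs of its separating coefficients, which the reflections permute transitively), exactly one member of each orbit being monotone non-decreasing in both coordinates. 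Since $|T(g)|=3$ forces dependence on both coordinates, all $t_3(m,n)$ functions with $|T(g)|=3$ lie in family (A), so $N_3=\tfrac14 t_3(m,n)$; and family (A) has $t(m,n)-(2m+2n-2)$ members (all functions except the two constants and the $2(m-1)+2(n-1)$ non-constant functions of one variable), whence $N_4=\tfrac14\bigl(t(m,n)-(2m+2n-2)-t_3(m,n)\bigr)$. Substituting $t(m,n)=f_1(m,n)+2$ and the formulas of Theorem~\ref{th_t3t4} gives the stated expressions for $c^{\triangle}_3$, $c^{\triangle}_4$, and $c^{\triangle}=c^{\triangle}_3+c^{\triangle}_4$.

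Finally, I would count the boundary edges directly: the cutting lines subdivide the two legs of the triangle into $m$ and $n$ segments respectively, while the hypotenuse $a_1+a_2=1$ (the cutting line of $(1,1)$) meets no other cutting line in the open triangle, for a total of $m+n+1$ boundary edges. Since every interior edge borders two cells and every boundary edge borders one, $3c^{\triangle}_3(m,n)+4c^{\triangle}_4(m,n)=2e^{\triangle}(m,n)-(m+n+1)$, which yields \eqref{eq:edges_triangle}; then \eqref{eq:points_triangle} follows from the Euler relation $v^{\triangle}(m,n)-e^{\triangle}(m,n)+c^{\triangle}(m,n)=1$ for a subdivision of the disk. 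All the asymptotic estimates follow from Lemma~\ref{lemma_fqmn} applied to $f_1$ and $f_2$. I expect the main obstacle to be the second step: one has to verify carefully that in family (A) the parameter cone genuinely lies inside $C$, so that no facet of $C$ contributes an extra edge of the cell, and to dispose of the few degenerate configurations in which a cell of family (B) or (C) touches a vertex of the triangle.
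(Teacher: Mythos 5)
Your proof is correct and follows essentially the same route as the paper's (much terser) proof: the paper likewise observes that the $m+n-1$ cells incident to the legs are triangular and gets the remaining counts as one quarter of $t_3(m,n)$ and $t_4(m,n)$ via Theorem~\ref{th_partition1} --- exactly what your monotone-function/reflection-symmetry argument establishes --- and then uses the identical relations $2e^{\triangle}=3c^{\triangle}_3+4c^{\triangle}_4+m+n+1$ and $v^{\triangle}-e^{\triangle}+c^{\triangle}=1$. The only (harmless) slip is the claim that every separating line of a family-(A) function has $a_0>0$: for the single function with $M_0=\{(0,0)\}$ a separating line may have $a_0=0$, but the closure of its cone still lies in the closed octant and the resulting edge lies on the hypotenuse, which is itself one of the cutting lines, so the cell is still a generalized $|T(g)|$-polygon and your counts are unaffected.
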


\begin{proof}
We notice that all cells incident to the axes are triangular and use Theorem~\ref{th_partition1} to obtain
$$
\begin{array}{lllll}
c^{\triangle}_3(m,n) & = & \frac{1}{4} t_3(m,n) + m + n - 1 & = & \frac{1}{2} f_2(m,n) + m + n + 1, \\
c^{\triangle}_4(m,n) & = & \frac{1}{4} t_4(m,n) - \frac{1}{2}(m + n - 1) & = & \frac{1}{4} f_1(m,n) - \frac{1}{2} f_2(m,n) - \frac{m}{2} - \frac{n}{2} - 1.
\end{array}
$$
The number $e^{\triangle}(m,n)$ of edges satisfies the equality:
$
2e^{\triangle}(m,n) = 3c^{\triangle}_3(m,n) + 4c^{\triangle}_4(m,n) + m + n + 1,
$
which implies \eqref{eq:edges_triangle}.
Now we use Euler characteristic for the triangle: 
$
v^{\triangle}(m,n) - e^{\triangle}(m,n) + c^{\triangle}(m,n) = 1
$
to obtain \eqref{eq:points_triangle}.
\end{proof}

\section*{Acknowledgments} 
The first author was supported by the National Science Foundation under the grant No. IIS-1253614.

\bibliographystyle{siam}
\bibliography{ave2d_en}

\end{document}